\numberwithin{figure}{section}
\numberwithin{table}{section}
\numberwithin{equation}{section}
\newtheorem{theorem}{Theorem}[section]
\newtheorem{lemma}[theorem]{Lemma}
\newtheorem{corollary}[theorem]{Corollary}
\newtheorem{proposition}[theorem]{Proposition}
\theoremstyle{definition}
\newtheorem{definition}[theorem]{Definition}
\newtheorem{remark}[theorem]{Remark}
\newenvironment{abstr}[1]{ \vspace{.05in}\footnotesize
       \parindent .2in
         {\upshape\bfseries #1. }\ignorespaces}{\par\vspace{.1in}}
\newenvironment{Abstract}{\begin{abstr}{Abstract}}{\end{abstr}}
\newenvironment{keywords}{\begin{abstr}{Key words}}{\end{abstr}}
\newenvironment{AMS}{\begin{abstr}{AMS subject classifications}}{\end{abstr}}
\DeclareMathOperator{\supp}{supp}
\DeclareMathOperator{\Int}{int}
\DeclareMathOperator{\diam}{diam}
\DeclareMathOperator{\HMM}{HMM}
\DeclareMathOperator{\LOD}{LOD}
\DeclareMathOperator{\ol}{ol}
\DeclareMathOperator{\loc}{loc}
\renewcommand{\Re}{\operatorname{Re}}
\newcommand{\halb}{\frac 12}
\newcommand{\nz}{\mathbb{N}}       
\newcommand{\rz}{\mathbb{R}}       
\newcommand{\pz}{\mathbb{P}}
\newcommand{\be}{\beta}
\newcommand{\de}{\delta}
\newcommand{\ep}{\varepsilon}
\newcommand{\om}{\omega}
\newcommand{\Om}{\Omega}
\newcommand{\si}{\sigma}
\newcommand\Ve{\mathbf{e}}
\newcommand\Vq{\mathbf{q}}
\newcommand\Vv{\mathbf{v}}
\newcommand\Vu{\mathbf{u}}
\newcommand\Vw{\mathbf{w}}
\newcommand\Vz{\mathbf{z}}
\newcommand\VI{\mathbf{I}}
\newcommand\VQ{\mathbf{Q}}
\newcommand\VV{\mathbf{V}}
\newcommand\VW{\mathbf{W}}
\newcommand\Veta{\boldsymbol{\eta}}
\newcommand\Vpsi{\boldsymbol{\psi}}
\newcommand\Vphi{\boldsymbol{\phi}}
\newcommand\CB{\mathcal{B}}
\newcommand\CH{\mathcal{H}}
\newcommand\CI{\mathcal{I}}
\newcommand\CK{\mathcal{K}}
\newcommand\CR{\mathcal{R}}
\newcommand\CS{\mathcal{S}}
\newcommand\CT{\mathcal{T}}
\newcommand\UN{\textup{N}}
\newcommand\VCI{\boldsymbol{\mathcal{I}}}
\begin{document}

\title{Localized Orthogonal Decomposition for two-scale Helmholtz-type problems%
\thanks{This work was supported by the Deutsche Forschungsgemeinschaft (DFG) in the project ``OH 98/6-1: Wellenausbreitung in periodischen Strukturen und Mechanismen negativer Brechung''} 
}

\author{Mario Ohlberger%
\thanks{Angewandte Mathematik: Institut f\"ur Analysis und Numerik, Westf\"alische Wilhelms-Uni\-ver\-si\-t\"at M\"unster, D-48149 M\"unster
}
\and Barbara Verf\"urth\footnotemark[2]}

\date{}

\maketitle

\begin{Abstract}
In this paper, we present a Localized Orthogonal Decomposition (LOD) in Petrov-Galerkin formulation for a two-scale Helmholtz-type problem. 
The two-scale problem is, for instance, motivated from the homogenization of the Helmholtz equation with high contrast, studied together with a corresponding multiscale method in (Ohlberger, Verf\"urth. A new Heterogeneous Multiscale Method for the Helmholtz equation with high contrast, arXiv:1605.03400, 2016).
There, an unavoidable resolution condition on the mesh sizes in terms of the wave number has been observed, which is known as ``pollution effect'' in the finite element literature.
Following ideas of (Gallistl, Peterseim. \textit{Comput.\ Methods Appl.\ Mech.\ Engrg.} 295:1-17, 2015),  we use standard finite element functions for the trial space, whereas the test functions are enriched by solutions of subscsale problems (solved on a finer grid) on local patches. Provided that the oversampling parameter $m$, which indicates the size of the patches, is coupled logarithmically to the wave number, we obtain a quasi-optimal method under a reasonable resolution of a few degrees of freedom per wave length, thus overcoming the pollution effect.
In the two-scale setting, the main challenges for the LOD lie in the coupling of the function spaces and in the periodic boundary conditions.
\end{Abstract}

\begin{keywords}
multiscale method; pollution effect; Helmholtz equation; finite elements; numerical homogenization.
\end{keywords}

\begin{AMS}
35J05, 35B27, 65N12, 65N15, 65N30, 78M40
\end{AMS}

\section{Introduction}
The numerical solution of high frequency Helmholtz problems with standard finite element methods is still a very challenging task due to the oscillatory nature of the solutions.
This manifests itself in the so-called pollution effect \cite{BS00pollutionhelmholtz}: A much smaller mesh size than needed for a meaningful approximation of the solution are required for the stability and convergence of the numerical scheme. Typically, this leads to a resolution condition $k^\alpha H=O(1)$ with $\alpha>1$ instead of $kH=O(1)$,  i.e. few degrees of freedom per wave length. 
The challenge becomes even greater when additionally studying the Helmholtz problem in a  heterogeneous medium, such as locally periodic structures.

(Locally) periodic media, such as photonic crystals, can exhibit astonishing properties such as band gaps, artificial magnetism, or negative refraction \cite{EP04negphC,CJJP02negrefraction,OBP02magneticactivity,PE03lefthanded}. 
One popular and successful modelling setup are scatterers made up of two materials with high contrast in the permittivity \cite{BBF09hom3d,BF04homhelmholtz,BS10splitring,BS13plasmonwaves,CC15hommaxwell,BF97homfibres,LS15negindex}.
As the materials' fine-scale structures are much smaller than the wavelength, homogenization and corresponding numerical multiscale methods, such as the Heterogeneous Multiscale Method (HMM), are efficient tools to reduce the problem's complexity.
Homogenization theory gives an effective Helmholtz problem, which describes the macroscopic behavior of the original problem, but involves the solution of some additional cell problems to determine the effective material parameters.
This procedure can also be coupled in the so-called \emph{two-scale equation}, which gives the macroscopic and the cell problems in one variational problem.
The Heterogeneous Multiscale Method can be seen as a Galerkin discetization of this two-scale equation and it gives good approximations even of the solution in the heterogeneous medium, see  \cite{OV16hmmlod1}.
However, as every standard Galerkin method, the HMM for Helmholtz-type problems also suffers from the pollution effect described in the beginning \cite{EM12helmholtz,MS11helmholtz,Sau06convanahelmholtz}.
There are several attempts to reduce this pollution effect, e.g.\ high-order finite element methods \cite{EM12helmholtz,MS11helmholtz}, (hybridizable) discontinuous Galerkin methods \cite{CLX13HDGhelmholtz,GM11HDGhelmholtz}, or (plane wave) Trefftz methods \cite{HMP16surveyTrefftz,HMP15pwdg,PPR15pwvem}.

Recently, the works  \cite{BGP15hethelmholtzLOD,GP15scatteringPG,P15LODhelmholtz} suggested a multiscale Petrov-Galerkin method for the Helmholtz equation to reduce the pollution effect.
The method is based on a so called Localized Orthogonal Decomposition (LOD), as first introduced in \cite{MP14LODelliptic}.
The LOD builds on ideas from the Variational Multiscale Method (VMM)  \cite{HFMQ98VMM,HS07VMM} by splitting the solution space into a coarse and a fine part. 
The coarse standard finite element functions are modified by adding a correction from the fine space, which is constructed as the kernel of an interpolation operator.
The corrections are problem dependent and computed by solving PDEs on a finer grid. 
In most cases the corrections show exponential decay, which justifies to truncate them to patches of coarse elements.
Since its development, the LOD has been successfully applied to elliptic boundary problems \cite{HM14LODbdry}, eigenvalue problems \cite{MP15LODeigenvalues}, mixed problems \cite{HHM15LODmixed}, parabolic problems \cite{MP15LODeigenvalues}, the wave equation \cite{AH14LODwave} or elasticity \cite{HP16LODeleasticity}.
A review is given in \cite{Pet15LODreview}.
As already discussed in \cite{EGH15LODpetrovgalerkin} and analyzed in more detail in \cite{Pet15LODreview}, Petrov-Galerkin formulations show the same stability and convergence behavior as the symmetric Galerkin methods while being less expensive with respect to communication. 
An extensive discussion on implementation aspects of the LOD, such as how to exploit a priori known structures to reduce the number of local subscale problems, is given in \cite{EHMP16LODimplementation} and some remarks for acoustic scattering in \cite{GP15scatteringPG}.

In this article, we present how the LOD can be applied to two-scale Helmholtz-type problems.
Following the ideas in \cite{GP15scatteringPG}, the test functions are modified by local subscale correction, which are solved on a fine grid fulfilling the resolution condition for standard Galerkin finite element methods.
The resulting test functions have support over patches of size $mH_c$ and $mh_c$; $H_c$ being the mesh size of the grid for the macroscopic domain $G$, $h_c$ being the mesh size of the grid for the unit square $Y$ and $m$ being the adjustable oversampling parameter.
Under the condition that $m\approx \log(k)$ and $k(H_c+h_c)\lesssim1$, the (two-scale) LOD is stable and quasi-optimal, i.e.\ the error between the LOD-approximation and the analytical solution of the two-scale equation is of the order of the best-approximation error.
These are the results expected from the one-scale setting.
The main contribution is the rigorous (theoretical) analysis for this LOD in the two-scale setting.
The novelty here is that first, we have to deal with coupled functional spaces and sesquilinear forms.
This coupling has to be taken care of in all new definitions of (again coupled) spaces and also in the estimates, where we have to jump back and forth between the two-scale norms and the properties of each individual space and interpolation operator.
Second, our coupled two-scale functional spaces involve periodic boundary conditions on the unit cube, which also have to be paid attention at when defining the interpolation operators and the oversampling patches at the boundary.

Finally, let us summarize the connection of the LOD in two-scale setting to original Helmholtz problem with high contrast, but also emphasize the general nature of our (theoretical) findings.
Combining the HMM of \cite{OV16hmmlod1} and the LOD of the present paper, we have  solved the Helmholtz problem with high contrast by a double approximation procedure: First removing the challenges from the fine-scale structures related to the periodicity $\de$ and then reducing the pollution effect related to the wave number $k$, where we recall the three-scale nature $\delta\ll k^{-1}<1$.
Our numerical experiments in \cite{OV16hmmlod1} have not experienced great restriction from the resolution condition, but more sophisticated examples or even higher wave numbers may make the application of the LOD favorable, which hereby has the necessary theoretical footing.
Concerning possible generalizations of our work, let us note that we concentrate on the two-scale setting described in \cite{OV16hmmlod1}, but the analysis can also be adapted to the case without high contrast\cite{CS14hmmhelmholtz} and might be useful for future numerical studies of the three-dimensional cases\cite{BBF09hom3d,BS10splitring,BS13plasmonwaves,CC15hommaxwell,LS15negindex}.
Moreover, the definition of the patches and interpolation operators in the periodic case and also the coupling and decoupling of various function spaces may be of general interest.

The paper is structured as follows: Section \ref{sec:setting} introduces the notation and problem setting, see also \cite{OV16hmmlod1}. In Section \ref{sec:LOD}, the LOD in Petrov-Galerkin formulation is defined in the two-scale setting, where we also give some general comments on implementation aspects. Stability and convergence of the error are discussed in Section \ref{sec:error}.  Section \ref{sec:decay} is devoted to the detailed proof of the decay of the correctors.

\section{The two-scale Helmholtz problem}
\label{sec:setting}
In this section, we introduce what we call the two-scale Helmholtz problem and the necessary notation.
For further details on the derivation of this two-scale model and its practical relevance we refer to \cite{OV16hmmlod1}.

For the remainder of this article, let $\Om\subset\subset G\subset \rz^2$ be two domains with (polygonal) Lipschitz boundary at least. 
Throughout this paper, we use standard notation for Lebesgue and Sobolev spaces: For a domain $\om$, $p\in [1, \infty)$, and $s\in \rz_{\geq 0}$, $L^p(\om)$ denotes the complex Lebesgue space and $H^s(\om)$ denotes the complex (fractional) Sobolev space. 
The dot denotes a normal (real) scalar product, for a complex scalar product we explicitly conjugate the second component by using $v^*$ as the conjugate complex of $v$. 
The complex $L^2$ scalar product on a domain $\om$ is abbreviated by $(\cdot, \cdot)_\om$ and the domain is omitted if no confusion can arise.
For $v\in H^1(\om)$, we frequently use the $k$-dependent norm
\[\|v\|_{1,k,\om}:=(\|\nabla v\|^2_\om+k^2\|v\|^2_\om)^{1/2},\]
which is obviously equivalent to the $H^1$ norm.
We write $Y:=[-\halb, \halb)^2$ to denote the two-dimensional unit square. 
We indicate $Y$-periodic functions\cite{All92twosc} with the subscript $\sharp$. For example, $H^1_{\sharp, 0}(Y)$ is the space of $Y$-periodic functions from $H^1_{\loc}(\rz^2)$ with zero average over $Y$.
For $Y^*\subset Y$, we denote by $H^1_{\sharp, 0}(Y^*)$ the restriction of functions in $H^1_{\sharp, 0}(Y)$ to $Y^*$. 
For $D\subset \subset Y$, $H^1_0(D)$ can be interpreted as subspace of $H^1_\sharp(Y)$ and we write $H^1_0(D)_\sharp$ to emphasize this periodic extension. By $L^p(\Om; X)$ we denote Bochner-Lebesgue spaces over the Banach space $X$.

The two-scale Helmholtz problem is now formulated as follows: 
We seek $\Vu:=(u, u_1, u_2)\in \CH$ such that
\begin{equation}
\label{eq:twoscaleeq}
\CB(\Vu, \Vpsi)=\int_{\partial G}g\psi^*\, d\si\qquad\forall \Vpsi:=(\psi, \psi_1, \psi_2)\in \CH
\end{equation}
with the two-scale function space $\CH:=H^1(G)\times L^2(\Om; H^1_{\sharp,0}(Y^*))\times L^2(\Om; H^1_0(D)_\sharp)$ and the two-scale sesquilinear form $\CB$ defined by
\begin{equation*}
\begin{split}
&\!\!\!\!\CB(\Vv, \Vpsi)\\
&:=\int_\Om \int_{Y^*}\!\ep_e^{-1}(\nabla v+\nabla_y v_1)\cdot (\nabla\psi^*+\nabla_y \psi_1^*)\, dy dx
+\int_G\int_D \ep_i^{-1}\nabla_y v_2\cdot \nabla_y \psi_2^*\, dydx\\
&\quad-k^2\int_G\int_Y (v+\chi_D v_2)(\psi^*+\chi_D \psi_2^*)\, dydx+\int_{G\setminus \overline{\Om}}\nabla v\cdot \nabla \psi^*\, dx-ik\int_{\partial G} v\psi^*\, d\si.
\end{split}
\end{equation*}
Beside the natural norm on $\CH$ it is convenient for error estimation to define the following two-scale energy norm
\begin{align}
\label{eq:errornorm}
\|(v, v_1, v_2)\|^2_{e, \om\times R}:=\|\nabla v +\nabla_y v_1\|^2_{\om\times R_1}+\|\nabla_y v_2\|^2_{\om\times R_2}+k^2\|v+\chi_D v_2\|^2_{\om\times R}
\end{align}
for a subdomain $\om\times R\subset G\times Y$ with $R_1:=R\cap Y^*$ and $R_2:=R\cap D$.
Furthermore, we introduce a version of the $H^1$ semi-norm on $\CH$ via
\begin{equation}
\label{eq:errorh1semi}
\|(v, v_1, v_2)\|^2_{1,e, \om \times R}:=\|\nabla v+\nabla_y v_1\|^2_{\om\times R_1}+\|\nabla_y  v_2\|^2_{\om \times R_2},
\end{equation}
which is induced by the sesquilinear form
\[(\Vv, \Vpsi)_{1,e, \om \times R}:=(\nabla v+\nabla_y v_1, \nabla \psi_1+\nabla_y \psi_2)_{\om\times R_1}+(\nabla_y  v_2, \nabla_y \psi_2)_{\om \times R_2}\qquad \forall \Vv, \Vpsi\in \CH.\]
We omit the subscript for the subdomain if $\om\times R=G\times Y$.
In \cite{OV16hmmlod1}, we proved the following Lemma.

\begin{lemma}
\label{lem:propertiesCB}
There exist constants $C_B>0$ and $C_{\min}:=\min\{1, \ep_e^{-1}, \Re(\ep_i^{-1})\}>0$ depending only on the parameters and the geometry, such that $\CB$ is continuous with constant $C_B$ and fulfills a G{\aa}rding inequality with constant $C_{\min}$, i.e.\
\begin{align*}
|\CB(\Vv, \Vpsi)|\leq C_B \|\Vv\|_e\|\Vpsi\|_e\quad \text{and}\quad \Re \CB(\Vv, \Vv)+2k^2\|v+\chi_D v_2\|_{G\times Y}^2\geq C_{\min}\|\Vv\|_e^2.
\end{align*}
\end{lemma}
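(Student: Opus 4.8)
The strategy is to estimate the five terms of $\CB$ separately and to exploit repeatedly that the two‑scale energy norm $\|\cdot\|_e$ dominates its restrictions to subdomains $\om\times R\subset G\times Y$ as well as each of its three constituents. The only genuinely ``two‑scale'' point is a bookkeeping one: since $\Vu=(u,u_1,u_2)\in\CH$ has $u_1\in L^2(\Om;H^1_{\sharp,0}(Y^*))$, the second component is understood to be extended by zero on $(G\setminus\Om)\times Y^*$, so that
\[
\|\nabla v+\nabla_y v_1\|^2_{G\times Y^*}=\|\nabla v+\nabla_y v_1\|^2_{\Om\times Y^*}+|Y^*|\,\|\nabla v\|^2_{G\setminus\Om}.
\]
From this identity one reads off $\|\nabla v\|_{G\setminus\Om}\le|Y^*|^{-1/2}\|\Vv\|_e$, and from the last term of $\|\cdot\|_e$, using $\chi_D\equiv0$ on $Y\setminus D$, one also gets $k\,\|v\|_{G\setminus\Om}\le|Y\setminus D|^{-1/2}\,k\,\|v+\chi_D v_2\|_{G\times Y}\le|Y\setminus D|^{-1/2}\|\Vv\|_e$. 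These, together with the obvious restriction inequalities $\|\nabla v+\nabla_y v_1\|_{\Om\times Y^*}\le\|\Vv\|_e$, $\|\nabla_y v_2\|_{G\times D}\le\|\Vv\|_e$ and $k\,\|v+\chi_D v_2\|_{G\times Y}\le\|\Vv\|_e$, are all the tools needed.

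For \emph{continuity}, apply the Cauchy--Schwarz inequality to each term. The first two terms are bounded by $\ep_e^{-1}\|\nabla v+\nabla_y v_1\|_{\Om\times Y^*}\|\nabla\psi+\nabla_y\psi_1\|_{\Om\times Y^*}$ and $|\ep_i^{-1}|\,\|\nabla_y v_2\|_{G\times D}\|\nabla_y\psi_2\|_{G\times D}$, hence by $C\|\Vv\|_e\|\Vpsi\|_e$; the mass term is handled by writing $k^2=k\cdot k$ and Cauchy--Schwarz; the $G\setminus\overline\Om$‑gradient term is bounded by $\|\nabla v\|_{G\setminus\overline\Om}\|\nabla\psi\|_{G\setminus\overline\Om}\le|Y^*|^{-1}\|\Vv\|_e\|\Vpsi\|_e$. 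For the impedance term $-ik\int_{\partial G}v\psi^*\,d\si$ I would use that, since $\Om\subset\subset G$, the traces on $\partial G$ only see $v,\psi$ on $G\setminus\overline\Om$, and invoke a $k$‑weighted (multiplicative) trace inequality $\|v\|^2_{L^2(\partial G)}\lesssim k^{-1}\|\nabla v\|^2_{G\setminus\overline\Om}+k\,\|v\|^2_{G\setminus\overline\Om}$; multiplying by $k$ and using the two sub‑bounds above gives $k\,\|v\|^2_{L^2(\partial G)}\lesssim\|\Vv\|^2_e$, and a standard homogeneity argument upgrades the resulting $\|\Vv\|^2_e+\|\Vpsi\|^2_e$ estimate to the product form. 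Collecting the constants yields $C_B$.

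For the \emph{G\aa rding inequality}, set $\Vpsi=\Vv$ and take real parts. The impedance term contributes $-\Re\!\big(ik\int_{\partial G}|v|^2\,d\si\big)=0$ because $k$ is real; the first, second and fourth terms contribute $\ep_e^{-1}\|\nabla v+\nabla_y v_1\|^2_{\Om\times Y^*}$, $\Re(\ep_i^{-1})\|\nabla_y v_2\|^2_{G\times D}$ and $\|\nabla v\|^2_{G\setminus\overline\Om}$; the mass term contributes $-k^2\|v+\chi_D v_2\|^2_{G\times Y}$. Adding $2k^2\|v+\chi_D v_2\|^2_{G\times Y}$ turns the last contribution into $+k^2\|v+\chi_D v_2\|^2_{G\times Y}$, leaving a sum of four nonnegative terms. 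The one nontrivial manipulation is to recombine the gradient pieces of $v$: since $|Y^*|\le|Y|=1$ we have $\|\nabla v\|^2_{G\setminus\overline\Om}\ge|Y^*|\,\|\nabla v\|^2_{G\setminus\Om}$, and with the zero‑extension identity this gives $\ep_e^{-1}\|\nabla v+\nabla_y v_1\|^2_{\Om\times Y^*}+\|\nabla v\|^2_{G\setminus\overline\Om}\ge\min\{1,\ep_e^{-1}\}\,\|\nabla v+\nabla_y v_1\|^2_{G\times Y^*}$. Taking $C_{\min}=\min\{1,\ep_e^{-1},\Re(\ep_i^{-1})\}$ and comparing with the definition \eqref{eq:errornorm} of $\|\cdot\|_e$ finishes the proof.

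\textbf{Expected main obstacle.} There is no deep difficulty here — the statement is essentially a careful bookkeeping exercise — but the step that needs attention is exactly the two‑scale coupling: reconciling the norms that live on $\Om\times Y^*$ and on $G\setminus\overline\Om$ with the single two‑scale energy norm on $G\times Y$ (which silently uses the zero‑extension of $v_1$ and the bound $|Y^*|\le1$), and getting the powers of $k$ right in the trace estimate for the impedance term.
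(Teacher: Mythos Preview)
The paper does not actually prove Lemma~\ref{lem:propertiesCB}; it merely imports the result from \cite{OV16hmmlod1}. So there is no in-paper proof to compare against. Your argument is nonetheless correct and is the natural one: term-by-term Cauchy--Schwarz for continuity, and for the G{\aa}rding inequality the observation that the impedance term is purely imaginary together with the recombination $\ep_e^{-1}\|\nabla v+\nabla_y v_1\|_{\Om\times Y^*}^2+\|\nabla v\|_{G\setminus\overline\Om}^2\ge C_{\min}\|\nabla v+\nabla_y v_1\|_{G\times Y^*}^2$ via the zero-extension identity and $|Y^*|\le 1$.

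One cosmetic point: the ``homogeneity argument'' you mention for the impedance term is not needed. From your $k$-weighted trace inequality you get $\sqrt{k}\,\|v\|_{L^2(\partial G)}\lesssim\|\Vv\|_e$ directly, and then Cauchy--Schwarz on $\partial G$ gives
\[
k\,|(v,\psi)_{\partial G}|\le \bigl(\sqrt{k}\,\|v\|_{L^2(\partial G)}\bigr)\bigl(\sqrt{k}\,\|\psi\|_{L^2(\partial G)}\bigr)\lesssim\|\Vv\|_e\,\|\Vpsi\|_e,
\]
which is already the product form.
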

Note that $C_{\min}$ can also be used to estimate the gradient terms in $\CB$ by $C_{\min}\|\cdot\|_{1,e}$ from below.
Furthermore, the unique solution $\Vu\in \CH$ to \eqref{eq:twoscaleeq} (with additional volume term $f\in L^2(G)$ on the right-hand side) fulfills the following stability estimate
\begin{equation}
\label{eq:stabiltwosc}
\|\Vu\|_e\leq C k^q(\|f\|_{L^2(G)}+\|g\|_{H^{1/2}(\partial G)}) \qquad \text{for some}\quad q\in \nz_0,
\end{equation}
see \cite{OV16hmmlod1} for a proof with $q=3$.

For the error analysis, we will compare the solution of the Localized Orthogonal Decomposition to a discrete reference solution, which is only needed for the theory and will never be computed in practical implementations.
We introduce conforming and shape regular triangulations $\CT_H$ and $\CT_h$ of $G$ and $Y$, respectively. Additionally, we assume that $\CT_H$ resolves the partition into $\Om$ and $G\setminus \overline{\Om}$ and that $\CT_h$ resolves the partition into $D$ and $Y^*$ and is periodic in the sense that it can be wrapped to a regular triangulation of the torus (without hanging nodes). 
We use the conforming subspace $\VV_{H,h}\subset \CH$ made up of linear Lagrange elements via $\VV_{H,h}:=V_H^1\times L^2(\Om; \widetilde{V}_h^1(Y^*))\times L^2(\Om; V_h^1(D))$, where the $L^2$ spaces with respect to $x$ can be additionally approximated by piece-wise polynomials.
The discrete reference solution $\Vu_{H,h}=(u_H, u_{h,1}, u_{h,2})\in\VV_{H, h}$ is the solution to
\begin{equation}
\label{eq:discrtwoscaleeq}
\CB(\Vu_{H,h},\Vpsi_{H,h})=\int_{\partial G}g\psi_H^*\, d\si\qquad \forall \Vpsi_{H,h}:=(\psi_H, \psi_{h,1}, \psi_{h,2})\in \VV_{H,h}.
\end{equation}
We assume that this direct discretization is stable in the following sense:
The (fine) mesh sizes $H$ and $h$ are small enough (in dependence on the wave number $k$) that there is a constant $C_{\HMM}$ such that
\begin{equation}
\label{eq:stabHMM}
(C_{\HMM}\,k^{q+1})^{-1}\leq \inf_{\Vv_{H,h}\in \VV_{H,h}}\sup_{\Vpsi_{H,h}\in \VV_{H,h}}\frac{\Re \CB(\Vv_{H,h}, \Vpsi_{H,h})}{\|\Vv_{H,h}\|_e\|\Vpsi_{h,h}\|_e}.
\end{equation}
In \cite{OV16hmmlod1}, we discussed that this direct discretization can be re-cast into the traditional formulation of a Heterogeneous Multiscale Method and we proved that the discrete inf-sup-condition \eqref{eq:stabHMM} holds under the classical resolution condition $k^{q+2}(H+h)=O(1)$.

\begin{remark}
We demonstrate the LOD at the specific example at the two-scale Helmholtz problem obtained in \cite{OV16hmmlod1}.
However, the theory can easily be extended to more general two-scale Helmholtz problems, which fulfill the following assumptions
\begin{itemize}
\item the variational problem \eqref{eq:twoscaleeq} involves a continuous sesquilinear form with G{\aa}rding inequality, i.e.\ an analogue of Lemma \ref{lem:propertiesCB};
\item the analytical solution fulfills a stability estimate \eqref{eq:stabiltwosc};
\item the (direct) Galerkin discretization \eqref{eq:discrtwoscaleeq} is stable \eqref{eq:stabHMM}.
\end{itemize}
\end{remark}

\section{The Localized Orthogonal Decomposition}
\label{sec:LOD}
In this section, we introduce the notation on meshes, finite element spaces, and (quasi)-inter\-pola\-tion operators and define the  Localized Orthogonal decomposition in Petrov-Galerkin formulation for the two-scale setting. We close with some remarks regarding an implementation of the two-scale LOD.

\subsection{Meshes and finite element spaces}
Let the (fine) meshes $\CT_H$ of $G$ and $\CT_h$ of $Y$ be given as in the previous section, we assume that $H$ and $h$ are small enough that \eqref{eq:stabHMM} is fulfilled.
We consider a second, coarse discretization scale $H_c>H$ and $h_c>h$: Let $\CT_{H_c}$ and $\CT_{h_c}$ denote corresponding conforming, quasi-uniform, and shape regular triangulations of $G$ and $Y$, respectively. 
As for the fine grids, we additionally assume that $\CT_{h_c}$ is periodic and that $\CT_{H_c}$ and $\CT_{h_c}$ resolve the partition of $G$ into $\Om$ and its complement and of $Y$ into $D$ and $Y^*$, respectively. We denote by $\CT_{h_c}(Y^*)$ and $\CT_{h_c}(D)$ the parts of $\CT_{h_c}$ triangulating $Y^*$ and $D$, respectively.
The global mesh sizes are defined as $H_c:=\max\{ \diam(T)|T\in \CT_{H_c}\}$ and $h_c:=\max\{\diam(S)|S\in \CT_{h_c}\}$.
For the sake of simplicity we assume that $\CT_H$ and $\CT_h$ are derived from $\CT_{H_c}$ and $\CT_{h_c}$, respectively, by some regular, possibly non-uniform, mesh refinement including at least one global refinement.
We consider simplicial partitions, but the theory of this paper carries over to quadrilateral  partitions \cite{GP15scatteringPG} and even meshless methods would be possible \cite{HMP14partunity}.

\begin{figure}
\label{fig:patch}
\begin{center}
\begin{tikzpicture}
\path[fill=lightgray] (3,0)--(4,1)--(3.5,1.5)--(4,2)--(2,4)--(0,2)--(0.5, 1.5)--(0,1)--(1,0)--cycle;
\path[fill=gray] (2,0)--(3,1)--(2.5, 1.5)--(3,2)--(2,3)--(1,2)--(1,1)--cycle;
\path[fill=black] (1.5, 1.5)--(2,2)--(2,1)--cycle;
\path[pattern=horizontal lines] (2, 4)--(1,4)--(1.5, 3.5)--cycle;
\path[pattern=horizontal lines] (2, 4)--(3,4)--(2.5, 3.5)--cycle;
\draw(0,0) rectangle (4,4);
\draw(1,0)--(1,4);
\draw(2,0)--(2,4);
\draw(3,0)--(3,4);
\draw(0,1)--(4,1);
\draw(0,2)--(4,2);
\draw(0,3)--(4,3);
\draw (0,0)--(4,4);
\draw (0,1)--(3,4);
\draw(0,2)--(2,4);
\draw(0,3)--(1,4);
\draw(1,0)--(4,3);
\draw(2,0)--(4,2);
\draw(3,0)--(4,1);
\draw(0,4)--(4,0);
\draw(0,3)--(3,0);
\draw(0,2)--(2,0);
\draw(0,1)--(1,0);
\draw(1,4)--(4,1);
\draw(2,4)--(4,2);
\draw(3,4)--(4,3);
\end{tikzpicture}
\end{center}
\caption{Triangle $T$ (in black) and its first and second order patches (additional elements for $\UN(T)$ in dark gray and additional elements for $\UN^2(T)$ in light gray). Striped triangles belong to $\UN^2(T)$ in the case of periodic boundary conditions.}
\end{figure}
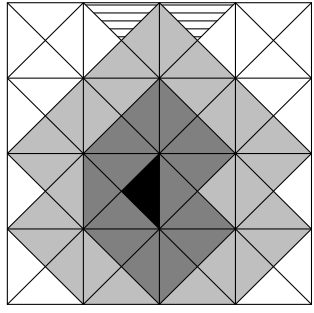

Given any  subdomain $\om\subset \overline{G}$ define its neighborhood via
\[\UN(\om):=\Int(\cup\{T\in \CT_{H_c}|T\cap\overline{\om}\neq \emptyset\})\]
and for any $m\geq 2$ the patches
\[\UN^1(\om):=\UN(\om)\qquad \text{and}\qquad\UN^m(\om):=\UN(\UN^{m-1}(\om)),\]
see Figure \ref{fig:patch} for an example.
The shape regularity implies that there is a uniform bound $C_{\ol, m, G}$ on the number of elements in the $m$-th order patch
\[\max_{T\in \CT_{H_c}}\sharp\{K\in \CT_{H_c}|K\subset\overline{\UN^m(T)}\}\leq C_{\ol, m, G}\]
and the quasi-uniformity implies that $C_{\ol, m, G}$ depends polynomially on  $m$. We abbreviate $C_{\ol, G}:=C_{\ol, 1, G}$. 
The patches can also be defined in a similar way for a subdomain $R\subset\overline{Y}$.
Here, we split $R=R_1\cup R_2$ with $R_1=R\cap D$ and $R_2=R\cap Y^*$, where $R_1$ or $R_2$ may be empty, and we write in short $\UN^m(R):=\UN^m(R_1)\cup \UN^m(R_2)$.
$\UN^m(R_1)$ is defined in the same way as before, in particular, it ends at the boundary $\partial D$.
For the patch $\UN^m(R_2)$ we interpret $\overline{Y^*}$ as part of the torus. This implies that $\UN^m(R_2)$ ends at the inner boundary $\partial D$, but is continued periodically over the outer boundary $\partial Y$.
This means that also the striped triangles in Figure \ref{fig:patch} belong to the second patch for the periodic setting.
We denote the overlap constants by $C_{\ol, m, Y}$ and $C_{\ol, Y}$. 
By slight abuse of notation, we write $\UN^m(\om\times R):=\UN^m(\om)\times \UN^m(R)$ for a subdomain $\om\times R\subset\overline{G}\times \overline{Y}$.

We denote the conforming finite element triple space consisting of lowest order Lagrange elements with respect to the meshes $\CT_{H_c}$ and $\CT_{h_c}$ by $\VV_{H_c, h_c}$ as in the previous section.
Again, we have $\VV_{H_c, h_c}:=V_{H_c}^1\times L^2(\Om; \widetilde{V}_{h_c}^1(Y^*))\times L^2(\Om; V_{h_c}^1(D))$ and we moreover note that $\VV_{H_c, h_c}\subset \VV_{H,h}\subset \CH$.

\subsection{Quasi-interpolation}
A key tool in the definition and the analysis is a bounded linear surjective  (quasi)-interpolation operator $\VI_{H_c, h_c}:\VV_{H,h}\to \VV_{H_c, h_c}$ that acts as a stable quasi-local projection in the following sense: 
It is a projection, i.e.\ $\VI_{H_c, h_c}\circ\VI_{H_c, h_c}=\VI_{H_c, h_c}$, and it is constructed as $\VI_{H_c, h_c}:=(I_{H_c}, I_{h_c}^{Y^*}, I_{h_c}^D)$, where each (quasi)-interpolation operator fulfills the following.
There exist  constants $C_{I_{H_c}}$, $C_{I_{h_c}^{Y^*}}$, and $C_{I_{h_c}^D}$ such that for all $\Vv_{H, h}:=(v_H, v_{h,1}, v_{h,2})\in \VV_{H,h}$ and for all $T\in \CT_{H_c}$, $S_1\in \CT_{h_c}(Y^*)$ and $S_2\in \CT_{h_c}(D)$
\begin{equation}
\label{eq:intpol}
\begin{split}
H_c^{-1}\|v_H-I_{H_c}(v_H)\|_T+\|\nabla I_{H_c}(v_H)\|_T&\leq C_{I_{H_c}}\|\nabla v_H\|_{\UN(T)},\\
h_c^{-1}\|v_{h,1}-I_{h_c}^{Y^*}(v_{h,1})\|_{T\times S_1}+\|\nabla_y  I_{h_c}^{Y^*}(v_{h,1})\|_{T\times S_1}&\leq C_{I_{H_c}^{Y^*}}\|\nabla_y v_{h,1}\|_{T\times \UN(S_1)},\\
h_c^{-1}\|v_{h,2}-I_{h_c}^D(v_{h,2})\|_{T\times S_2}+\|\nabla_y I_{h_c}^D(v_{h,2})\|_{T\times S_2}&\leq C_{I_{H_c}^D}\|\nabla_y v_{h,2}\|_{T\times \UN(S_2)}.
\end{split}
\end{equation}
We abbreviate $C_{\VI}:=\max\{C_{I_{H_c}}, C_{I_{h_c}^{Y^*}}, C_{I_{H_c}^D}\}$.
Under the mesh condition that $k(H_c+h_c)\lesssim 1$, this implies stability in the two-scale energy norm
\begin{equation}
\label{eq:intpolenergystable}
\|\VI_{H_c, h_c}\Vv_{H,h}\|_e\leq C_{\VI, e}\|\Vv_{H,h}\|_e\qquad \forall \Vv_{H,h}\in \VV_{H,h}.
\end{equation}

The quasi-interpolation operator $\VI_{H_c, h_c}$ is not unique: A different choice might lead to a different Localized Orthogonal Decomposition and this  can even affect the practical performance of the method\cite{P15LODhelmholtz}.
One popular choice is the concatenation of the $l^2$ projection onto piece-wise polynomials and the Oswald interpolation operator.
Other choices are discussed in \cite{EHMP16LODimplementation,Pet15LODreview}.
For the operators $I_{h_c}^{Y^*}$ and $I_{h_c}^D$ not that they only act with respect to the second variable $y$.
For $I_{h_c}^{Y^*}$, one can preserve periodicity as follows: The averaging process of the Oswald interpolation operator has to be continued over the periodic boundary (as for the patches before).

\subsection{Definition of the LOD}
The method approximates the discrete two-scale solution $\Vu_{H,h}:=(u_H, u_{h,1}, u_{h,2})$ to \eqref{eq:discrtwoscaleeq} for given (fine) mesh sizes $H$, $h$. It is determined by the choice of the coarse mesh sizes $H_c$ and $h_c$ and the oversampling parameter $m$ explained in the following.
We assign to any $(T, S_1, S_2)\in \CT_{H_c}\times \CT_{h_c}(Y^*)\times \CT_{h_c}(D)$ its $m$-th order patch $G_T\times Y^*_S\times D_S:=\UN^m(T)\times \UN^m(S_1)\times \UN^m(S_2)$ and define for any $\Vv_{H,h}=(v_H, v_{h,1}, v_{h,2}), \Vpsi_{H,h}=(\psi_H, \psi_{h,1}, \psi_{h,2})\in \VV_{H,h}$ the localized sesquilinear form
\begin{align*}
&\!\!\!\!\CB_{G_T\times Y_S}(\Vv_{H,h}, \Vpsi_{H,h})\\*
&:=(\ep_e^{-1}(\nabla v_H+\nabla_y v_{h,1}), \nabla \psi_H+\nabla_y \psi_{h,1})_{(G_T\cap \Om)\times Y^*_S}+(\ep_i^{-1}\nabla_y v_{h,2}, \nabla_y \psi_{h,2})_{G_T\times D_S}\\*
&\qquad+(\nabla v_H, \nabla \psi_H)_{G_T\cap(G\setminus \overline{\Om})}
-k^2(v_H+\chi_Dv_{h,2}, \psi_H,+\chi_D\psi_{h,2})_{G_T\times Y_S}\\*
&\qquad-ik(v_H, \psi_H)_{\partial G_T\cap\partial G}
\end{align*}
with $Y_S:=D_S\cup Y^*_S$. For $m=0$ (i.e.\ $\UN^m(T)=T$), we write $\CB_{T\times S}$ with $S=S_1\cup S_2$.
Note that the oversampling parameter does not have to be the same for $G$, $Y^*$, and $D$. We could as well introduce patches $\UN^{m_0}(T)\times \UN^{m_1}(S_1)\times \UN^{m_2}(S_2)$, but we choose $m_0=m_1=m_2=:m$ for simplicity of presentation and to improve readability.

We define the (truncated) finite element functions on the fine-scale meshes as
\begin{align*}
V_H(G_T)&:=\{v_H\in V_H^1|v_H=0 \text{ outside }G_T\},\\
L^2(\Om_T, \widetilde{V}_h^1(Y^*_S))&:=\{v_{h, 1}\in L^2(\Om; \widetilde{V}_h^1(Y^*))|v_{h,1}=0\text{ outside }(G_T\cap \Om)\times (Y^*)_S\}
\end{align*}
and $L^2(\Om_T; V_h^1(D_S))$ in a similar way.
Define the null space
\begin{align*}
&\!\!\!\!\VW_{H,h}(G_T\times Y_S)\\
&:=\{\Vw_{H,h}\in V_H(G_T)\times L^2(\Omega_T; \tilde{V}_h(Y^*_S))\times L^2(\Omega_T; V_h(D_S))|\VI_{H_c, h_c}(\Vw_{H,h})=0\}
\end{align*}
and note that $\VW_{H,h}(G_T\times Y_S):=W_H(G_T)\times L^2(\Om; W_h(Y^*_S))\times L^2(\Om; W_h(D_S))$, where $W_H$ and $W_h$ are defined as the kernels of the corresponding (single) interpolation operators $I_{H_C}$ and $I_{h_c}^{Y^*}$ and $I_{h_c}^D$, respectively.
For given $\Vv_{H_c, h_c}\in \VV_{H_c, h_c}$ we define the localized correction $\VQ_m(\Vv_{H_c,h_c}):=(Q_m(v_{H_c}), Q_{m,1}(v_{h_c, 1}), Q_{m,2}(v_{h_c, 2}))$ as 
\begin{equation*}
\VQ_m(\Vv_{H_c, h_c}):=\sum_{(T, S_1, S_2)\in \CT_{H_c}\times \CT_{h_c}(Y^*)\times \CT_{h_c}(D)}\VQ_{T\times S, m}(\Vv_{H_c, h_c}|_{T\times S}),
\end{equation*}
where $\VQ_{T\times S, m}(\Vv_{H_c, h_c}|_{T\times S})\in \VW_{H,h}(G_T\times Y_S)$ solves the following subscale corrector problem
\begin{align}
\label{eq:subscalecorrec}
\CB_{G_T\times Y_S}(\Vw, \VQ_{T\times S, m}(\Vv_{H_c, h_c}|_{T\times S}))=\CB_{T\times S}(\Vw, \Vv_{H_c, h_c})\quad \forall \Vw\in \VW_{H, h}(G_T\times Y_S).
\end{align}
The space of test functions then reads
\[\overline{\VV}_{H_c, h_c, m}:=(1-\VQ_m)(\VV_{H_c, h_c})\]
and can be written as triple
\[\overline{\VV}_{H_c, h_c, m}=\overline{V}_{H_c,m}\times L^2(\Om;\overline{V}_{h_c, m}(Y^*))\times L^2(\Om; \overline{V}_{h_c, m}(D)).\]
We emphasize that $\dim\overline{\VV}_{H_c, h_c,m}=\dim \VV_{H_c, h_c}$ is low-dimensional and the dimension does not depend on $H$, $h$, or $m$.

\begin{definition}
\label{def:LODlocal}
The two-scale Localized Orthogonal Decomposition in Petrov-Galerkin formulation seeks $\Vu_{H_c, h_c}\in \VV_{H_c, h_c}$ such that
\begin{equation}
\label{eq:LODlocal}
\CB(\Vu_{H_c, h_c},\overline{\Vpsi}_{H_c, h_c})=(g, \overline{\psi}_{H_c})_{\partial G}\qquad \forall \overline{\Vpsi}_{H_c, h_c}:=(\overline{\psi}_{H_c}, \overline{\psi}_{h_c, 1}, \overline{\psi}_{h_c, 2})\in \overline{\VV}_{H_c, h_c, m}. 
\end{equation}
\end{definition}
The error analysis will show that the choice $k(H_c+h_c)\lesssim 1$ and $m\approx \log k$ suffices to guarantee stability and quasi-optimality of the method, provided that the direct discretization \eqref{eq:discrtwoscaleeq} (with mesh widths $H$, $h$) is stable.

As discussed in \cite{P15LODhelmholtz}, further stable variants of the method are possible: The local subscale correction procedure can be applied to only the test functions, only the ansatz functions, or both ansatz and test functions.

\subsection{Remarks on implementation aspects}
The present approach of the LOD exploits the two-scale structure of the underlying problem. In practice, one cannot work with the space triples such as $\VV_{H_c, h_c}$, but will look at each of the function spaces separately.
The LOD consists of two main steps: First, the modified basis functions in $\overline{\VV}_{H_c, h_c, m}$ have to be determined, which includes the solution of the localized subscale corrector problems \eqref{eq:subscalecorrec}.  Second, the actual LOD-approximation is computed as solution to \eqref{eq:LODlocal}.
In this section, we explain how the computations in the macroscopic domain $G$ and on the unit square $Y$ can be decoupled in both steps.
For general considerations on how to implement an LOD, for example algebraic realizations of the problems, we refer to \cite{EHMP16LODimplementation}.

\textbf{Computation of modified bases.}\hspace{1ex}
We observe that due to the sesquilinearity of $\CB$ the following linearity for the correction operators $\VQ_m$ holds
\begin{align*}
\VQ_m\Vv_{H_c, h_c}&=\VQ_m(v_{H_c}, 0, 0)+\VQ_m(0, v_{h_c, 1}, 0)+\VQ_m(0, 0, v_{h_c, 2})\\*
&=(Q_m(v_{H_c}), 0, 0)+(0, Q_{m,1}(v_{h_c}), 0)+(0,0, Q_{m, 2}(v_{h_c, 2})).
\end{align*} 
This means that the corrections of the basis functions in $V_{H_c}^1$, $\widetilde{V}_{h_c}^1(Y^*)$ and $V_{h_c}^1(D)$ can be computed separately in the following way:
\begin{enumerate}
\item Choose a basis $\{\lambda_x\}$ of $V_{H_c}^1$, $\{\lambda_{y,1}\}$ of $\widetilde{V}_{h_c}^1(Y^*)$ and $\{\lambda_{y,2}\}$ of $V_{h_c}^1(D)$.
\item For each basis function $\lambda_x$, $\lambda_{y,1}$ and $\lambda_{y,2}$ do
\begin{enumerate}
\item Find the solutions $Q_{T\times S,m}(\lambda_x)$, $Q_{T\times S, m, 1}(\lambda_{y,1})$ and $Q_{T\times S, m, 2}(\lambda_{y,2})$ of the corrector problem \eqref{eq:subscalecorrec} for each $T\in \CT_{H_c}$, $S_1\in \CT_{h_c}(Y^*)$ and $S_2\in \CT_{h_c}(D)$.
This needs the determination of $W_H(G_T)$, $W_h(Y^*_S)$ and $W_h(D_S)$.
\item Build up the modified bases $\overline{\lambda}_x$ of $\overline{V}_{H_c, m}$, $\overline{\lambda}_{y,1}$ of $\overline{V}_{h_c,m}(Y^*)$ and $\overline{\lambda}_{y,2}$ of $\overline{V}_{h_c, m}(D)$ via $\overline{\lambda}_x:=\lambda_x-\sum_{(T, S_1, S_2)\in \CT_{H_c}\times \CT_{h_c}(Y^*)\times \CT_{h_c}(D)}Q_{T\times S,m}(\lambda_x)$, etc.
\end{enumerate}
\end{enumerate}
Note that no communication between the basis functions on $G$, $Y^*$, and $D$ is needed and therefore, the computation of the modified bases can be easily parallelized. 
Only if the parameters $\ep_i$ and $\ep_e$ are constant w.r.t.\ $x$ as here, the corrections $Q_{T\times S, m, 1}$ and $Q_{T\times S, m, 2}$ are $x$-independent.
Depending on the choice of the interpolation operator, Lagrange multipliers can be employed to decode that a function belongs to $W_H$ or $W_h$, see \cite{EHMP16LODimplementation}.

\smallskip
We can further decrease the computational complexity of the localized corrector problems by decoupling the integrals over $G$ and $Y$ and by reducing the number of correction problems.
The potential gain of course hinges on (additional) structure of the parameters and the meshes with the following general observations:
\begin{itemize}
\item The corrections $Q_{T\times S, m,1} $ and $Q_{T\times S, m, 2}$ only have to be computed for $T\in \CT_{H_c}$ with $T\cap \Om\neq \emptyset$.
\item It is sufficient to choose test functions of the form $\Vw=(w, 0, 0)$ for $Q_{T\times S, m}$, $\Vw=(0, w_1, 0)$ for $Q_{T\times S, m,1 }$, and $\Vw=(0, 0, w_2)$ for $Q_{T\times S, m, 2}$. 
\item In the case of constant parameters $\ep_e$ and $\ep_i$, the corrector problems for $Q_{T\times S, m, 1}$ and $Q_{T\times S, m, 2}$ include information on $T$ only in form of the weights $|T|$ and $|\Om_T|$; and the problems for $Q_{T\times S, m}$ only depend on $S$ in form of the weights  $|S_1|$ and $|Y^*_S|$.
\item In case of structured meshes $\CT_{H_c}$ and $\CT_{h_c}$ and constant parameters, we can exploit symmetries to reduce the number of corrector problems \cite{GP15scatteringPG}.
\end{itemize}

\textbf{Computation of the LOD-approximation.}\hspace{1ex} 
The LOD-approximation is defined as the solution to \eqref{eq:LODlocal}. 
This problem is similar to the discrete two-scale equation \eqref{eq:discrtwoscaleeq}, only the test functions have been modified. 
Therefore, the LOD-approximation can be re-interpreted as an HMM-approximation with modified test functions and corrector problems.
To be more explicit, $\Vu_{H_c, h_c}\in \VV_{H_c, h_c}$ from Definition \ref{def:LODlocal} can be characterized as $\Vu_{H_c, h_c}=(u_{H_c}, K_{h_c, 1}(u_{H_c}), K_{h_c, 2}(u_{H_c}))$, where $u_{H_c}\in V_{H_c}^1$ is the solution to a HMM with modified test functions and the corrections $K_{h_c, 1}(u_{H_c})$ and $K_{h_c, 2}(u_{H_c})$ are computed from $u_{H_c}$ and its reconstructions as described in \cite{OV16hmmlod1}; see also \cite{Ohl05HMM,HOV15hmmmaxwell} for similar reformulations in different settings.
The HMM with modified test functions involves the following two steps: 
\begin{enumerate}
\item Solve the cell problems for the reconstructions $R_1$ and $R_2$ around each quadrature point of the macroscopic triangulation $\CT_{H_c}$ using  test functions in $\overline{V}_{h_c, m}(Y^*)$ and $\overline{V}_{h_c, m}(D)$.
\item Assemble the macroscopic sesquilinear form $B_H$ with the computed reconstructions and the test functions in $\overline{V}_{H_c, m}$.
\end{enumerate}
Note that the reconstructions $R_1$ and $R_2$ as well as the fine-scale correctors $K_{h,1}$ and $K_{h,2}$ are different from those in \cite{OV16hmmlod1} because of the modified test functions.
This reformulation of the LOD-approximation as solution to a (modified) HMM decouples  the computations on $Y$ and $G$ and no function triple spaces have to be considered. 
This is one great advantage of the present Petrov-Galerkin ansatz for the LOD in comparison to a Galerkin ansatz:
We only need to compute reconstructions of standard Lagrange basis functions in $V_{H_c}^1$, but not of the basis functions in $\overline{V}_{H_c, m}$.

\section{Error analysis}
\label{sec:error}
The error analysis is based on the observation that the localized subscale corrector problems \eqref{eq:subscalecorrec} can be seen as perturbation of idealized subscale problems posed on the whole domain $G\times Y$.
So let us introduce idealized counterparts of the correction operators $\VQ_{T\times S, m}$ and $\VQ_m$ where the patch $G_T\times Y_S$ equals $G\times Y$, roughly speaking ``$m=\infty$''. 
Define the null space
\begin{equation*}
\begin{split}
\VW_{H,h}&:=W_H\!\times\! L^2(\Om; W_h(Y^*))\!\times\! L^2(\Om; W_h(D)):=\{\Vv_{H,h}\in \VV_{H,h}|\VI_{H_c, h_c}(\Vv_{H,h})=0\}.
\end{split}
\end{equation*}
For any $\Vv_{H,h}\in \VV_{H,h}$, the idealized element corrector problem seeks $\VQ_{T\times S, \infty}\Vv_{H,h}\in \VW_{H,h}$ such that
\begin{equation}
\label{eq:idealsubscalecorrec}
\CB(\Vw, \VQ_{T\times S, \infty}\Vv_{H,h})=\CB_{T\times S}(\Vw, \Vv_{H,h})\qquad \forall\Vw\in \VW_{H,h},
\end{equation} 
and we define
\begin{equation}
\label{eq:Qinfty}
\VQ_{\infty}(\Vv_{H,h}):=\sum_{(T, S_1, S_2)\in \CT_{H_c}\times \CT_{h_c}(Y^*)\times \CT_{h_c}(D)}\VQ_{T\times S, \infty}(\Vv_{H,h}).
\end{equation}
The following result implies the well-posedness of the idealized corrector problems.

\begin{lemma}
\label{lem:wellposedcorrec}
Under the assumption
\begin{equation}
\label{eq:resolassptcorrec}
k(C_{I_{H_c}}\sqrt{C_{ol, G}}\,H_c+C_{I_{h_c}^D}\sqrt{C_{ol, Y}}\,h_c)\leq \sqrt{C_{\min}/2},
\end{equation}
we have for all $\Vw_{h,h}:=(w_H, w_{h,1}, w_{h,2})\in \VW_{H,h}$ the following equivalence of norms
\[\|(w_H, w_{h,1}, w_{h,2})\|_{1,e}\leq \|(w_H, w_{h,2}, w_{h,2})\|_e\leq \sqrt{1+C_{\min}/2}\;\|(w_H, w_{h,1}, w_{h,2})\|_{1,e},\]
and coercivity
\[C_{\min}/2\; \|(w_H, w_{h,1}, w_{h,2})\|^2_{1,e}\leq \Re\CB(\Vw_{H,h}, \Vw_{H,h}),\]
where the $H^1$-semi norm $\|\cdot\|_{1,e}$ is defined in \eqref{eq:errorh1semi}.
\end{lemma}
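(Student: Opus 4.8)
The plan is to reduce all three assertions to a single Poincaré-type inequality on the kernel space, namely that for every $\Vw_{H,h}=(w_H,w_{h,1},w_{h,2})\in\VW_{H,h}$ one has
\[
k\,\|w_H+\chi_D w_{h,2}\|_{G\times Y}\;\le\;\sqrt{C_{\min}/2}\;\|\Vw_{H,h}\|_{1,e}.
\]
Both norm equivalences are then essentially free. The left inequality needs no assumption at all: comparing \eqref{eq:errornorm} with \eqref{eq:errorh1semi} gives the identity $\|\Vw_{H,h}\|_e^2=\|\Vw_{H,h}\|_{1,e}^2+k^2\|w_H+\chi_D w_{h,2}\|_{G\times Y}^2$, hence $\|\Vw_{H,h}\|_{1,e}\le\|\Vw_{H,h}\|_e$. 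Inserting the Poincaré bound into the same identity yields $\|\Vw_{H,h}\|_e^2\le(1+C_{\min}/2)\|\Vw_{H,h}\|_{1,e}^2$, the right inequality.

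The heart of the argument is the Poincaré bound, and here I would use that $\Vw_{H,h}$ lies in the kernel of $\VI_{H_c,h_c}$; concretely $I_{H_c}(w_H)=0$ and $I_{h_c}^D(w_{h,2})=0$ (vanishing of $I_{h_c}^{Y^*}(w_{h,1})$ is not needed, since $w_{h,1}$ does not enter $w_H+\chi_D w_{h,2}$). By the triangle inequality and $|Y|=1$ it suffices to bound $\|w_H\|_G$ and $\|w_{h,2}\|_{\Om\times D}$ separately. Putting $I_{H_c}(w_H)=0$ into the first line of \eqref{eq:intpol} gives $\|w_H\|_T\le C_{I_{H_c}}H_c\|\nabla w_H\|_{\UN(T)}$ for every $T\in\CT_{H_c}$; squaring, summing over $T$, and using the overlap bound $C_{\ol,G}$ yields $\|w_H\|_G\le C_{I_{H_c}}\sqrt{C_{\ol,G}}\,H_c\,\|\nabla w_H\|_G$. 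The analogous computation with the third line of \eqref{eq:intpol}, summed over $(T,S_2)\in\CT_{H_c}\times\CT_{h_c}(D)$ with overlap constant $C_{\ol,Y}$, gives $\|w_{h,2}\|_{\Om\times D}\le C_{I_{h_c}^D}\sqrt{C_{\ol,Y}}\,h_c\,\|\nabla_y w_{h,2}\|_{\Om\times D}$. Since $\|\nabla w_H\|_G$ and $\|\nabla_y w_{h,2}\|_{\Om\times D}$ are both controlled by $\|\Vw_{H,h}\|_{1,e}$ via \eqref{eq:errorh1semi}, adding the two bounds and invoking the resolution assumption \eqref{eq:resolassptcorrec} produces the claimed inequality.

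Coercivity is then immediate: in $\Re\CB(\Vw_{H,h},\Vw_{H,h})$ the boundary term $-ik(w_H,w_H)_{\partial G}$ is purely imaginary and drops, the gradient part of $\Re\CB(\Vw_{H,h},\Vw_{H,h})$ is bounded from below by $C_{\min}\|\Vw_{H,h}\|_{1,e}^2$ (the observation recorded right after Lemma \ref{lem:propertiesCB}), and the remaining term $-k^2\|w_H+\chi_D w_{h,2}\|_{G\times Y}^2$ is absorbed using the squared Poincaré bound, leaving $\Re\CB(\Vw_{H,h},\Vw_{H,h})\ge(C_{\min}/2)\|\Vw_{H,h}\|_{1,e}^2$.

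The main obstacle is not any single inequality but the two-scale (and periodic) bookkeeping: one must split the coupled quantities $\|\Vw_{H,h}\|_{1,e}$ and $\|w_H+\chi_D w_{h,2}\|_{G\times Y}$ into their macro- and cell-parts, apply the element-wise estimates \eqref{eq:intpol} for the three separate interpolation operators, re-sum over patches with the correct overlap constant ($C_{\ol,G}$ on $\CT_{H_c}$, $C_{\ol,Y}$ on $\CT_{h_c}$), and reassemble into the two-scale norms—while tracking that $w_{h,1}$ and $w_{h,2}$ live over $\Om$ only in the $x$-variable and that, among the subscale patches, it is the $Y^*$-patches that are continued periodically across $\partial Y$. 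This two-scale bookkeeping together with the periodic boundary handling is precisely what is new here relative to the one-scale LOD analysis.
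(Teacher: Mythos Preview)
Your proposal is correct and follows essentially the same approach as the paper: bound $k^2\|w_H+\chi_D w_{h,2}\|_{G\times Y}^2$ by splitting via the triangle inequality, inserting $I_{H_c}(w_H)=0$ and $I_{h_c}^D(w_{h,2})=0$, applying \eqref{eq:intpol} element-wise with the overlap constants, and then invoking \eqref{eq:resolassptcorrec}; the norm equivalence and coercivity follow exactly as you describe. If anything, you spell out more explicitly than the paper does the element-wise summation with $C_{\ol,G}$, $C_{\ol,Y}$ and the observation that the boundary term is purely imaginary.
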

\begin{proof}
The essential observation is that for any $(w_h, w_{h,1}, w_{h,2})\in \VW_{H,h}$ the property of the quasi-interpolation operators \eqref{eq:intpol} implies that
\begin{align*}
&\!\!\!\!k^2\|w_H+\chi_Dw_{h,2}\|^2_{G\times Y}\\
&\leq k^2\bigl(\|w_H\|_G+\|w_{h,2}\|_{G\times D}\bigr)^2\\
&= k^2\bigl(\|w_H-I_{H_c}(w_H)\|_G+\|w_{h,2}-I_{h_c}^D(w_{h,2})\|_{G\times D}\bigr)^2\\
&\leq k^2\bigl(H_c\, C_{I_{H_c}}\sqrt{C_{ol, G}}\, \|\nabla w_H\|_G+h_c\, C_{I_{h_c}^D}\sqrt{C_{ol, Y}}\|\nabla_y w_{h,2}\|_{G\times D}\bigr)^2.
\end{align*}
This directly yields the equivalence of norms on $\VW_{H,h}$ under the resolution condition \eqref{eq:resolassptcorrec}.  For the coercivity we observe that
\begin{equation*}
\Re\CB(\Vw_{H,h}, \Vw_{H,h})\geq C_{\min}\|\Vw_{H,h}\|^2_{1,e}-k^2\|w_H+\chi_D w_{h,2}\|^2_{G\times Y}.
\end{equation*}
\end{proof}

As the sesquilinear form $\CB$ is also continuous (see Lemma \ref{lem:propertiesCB}), Lemma \ref{lem:wellposedcorrec} implies that the idealized corrector problem \eqref{eq:idealsubscalecorrec} is well-posed and that the idealized correctors $\VQ_\infty$ defined by \eqref{eq:Qinfty} are continuous w.r.t.\ the two-scale energy norm
\[\|\VQ_\infty(\Vv_{H,h})\|_e\leq C_{\VQ}\,\|\Vv_{H,h}\|_e \qquad \text{for all}\quad \Vv_{H,h}\in \VV_{H,h}.\]
Since the inclusion $\VW_{H,h}(G_T\times Y_S)\subset \VW_{H,h}$ holds, the well-posedness result carries over to the localized corrector problems \eqref{eq:subscalecorrec} with the same constant.

The proof of the well-posedness of the two-scale LOD in Petrov-Galerkin formulation \eqref{eq:LODlocal} relies on the fact that $(\VQ_\infty-\VQ_m)(\Vv)$ decays exponentially with the distance from $\supp(\Vv)$. The difference between idealized and localized correctors is quantified in the next theorem. The proof is given in Section \ref{sec:decay} and is based on the observation that $\VQ_\infty(\Vv|_{T\times S})$ decays exponentially with distance from $T\times S$.

\begin{theorem}
\label{thm:decaycorrec}
Under the resolution condition \eqref{eq:resolassptcorrec} there exist constants $C_1$, $C_2$, and $0<\be<1$, independent of $H_c$, $h_c$, $H$, and $h$,  such that for any $\Vv_{H_c,h_c}\in \VV_{H_c, h_c}$, any $(T, S_1, S_2)\in \CT_{H_c}\times\CT_{h_c}(Y^*)\times\CT_{h_c}(D)$ and any $m\in \nz$ it holds
\begin{align}
\label{eq:decaydiffcorrecT}
\|(\VQ_{T\times S, \infty}-\VQ_{T\times S, m})(\Vv_{H_c, h_c})\|_{1,e}&\leq C_1\be^m\|\Vv_{H_c, h_c}\|_{1,e,T\times S},\\
\label{eq:decaydiffcorrec}
\|(\VQ_\infty-\VQ_m)(\Vv_{H_c, h_c})\|_{1,e}&\leq C_2(\sqrt{C_{ol,m,G}}+\sqrt{C_{ol,m, Y}})\be^m\|\Vv_{H_c, h_c}\|_{1,e}.
\end{align}
\end{theorem}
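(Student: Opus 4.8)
The plan is to establish the exponential decay of the idealized element correctors $\VQ_{T\times S,\infty}(\Vv_{H_c,h_c})$ with distance from the patch $T\times S$, and then deduce \eqref{eq:decaydiffcorrecT} via a cut-off argument and \eqref{eq:decaydiffcorrec} by summation over all patches. First I would introduce, for $\ell\in\nz$, the annular regions $G\times Y \setminus \overline{\UN^\ell(T\times S)}$ and a scalar cut-off function $\eta$ that is $0$ on $\UN^{\ell-1}(T\times S)$ and $1$ outside $\UN^\ell(T\times S)$, with $\|\nabla\eta\|_{L^\infty}\lesssim (H_c+h_c)^{-1}$ on each of the two scales; here one must be careful that $\eta$ respects the periodic continuation of the patches $\UN^\ell(S_2)$ over $\partial Y$, as stressed in the text. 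The key algebraic identity is that $\phi:=\VQ_{T\times S,\infty}(\Vv_{H_c,h_c})$ satisfies $\CB(\Vw,\phi)=\CB_{T\times S}(\Vw,\Vv_{H_c,h_c})$ for all $\Vw\in\VW_{H,h}$, and the right-hand side vanishes whenever $\supp\Vw\cap(T\times S)=\emptyset$. So testing with $\Vw$ supported away from $T\times S$ and built from $\eta\phi$ minus its interpolant (to land back in $\VW_{H,h}$) gives coercivity control of $\phi$ on the far region by its energy on a thin annulus, using Lemma~\ref{lem:wellposedcorrec} for coercivity in $\|\cdot\|_{1,e}$, Lemma~\ref{lem:propertiesCB} for continuity, and \eqref{eq:intpol} to bound the interpolation commutator terms.

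This yields the one-step decay estimate $\|\phi\|_{1,e,G\times Y\setminus \overline{\UN^\ell(T\times S)}}^2 \le \gamma\,\|\phi\|_{1,e,\UN^\ell(T\times S)\setminus\overline{\UN^{\ell-1}(T\times S)}}^2$ for some $\gamma<1$ depending only on $C_{\min}$, $C_B$, $C_{\VI}$, and the overlap constants; adding the annulus term to both sides and iterating over $\ell=1,\dots,m$ gives geometric decay $\|\phi\|_{1,e,G\times Y\setminus\overline{\UN^m(T\times S)}}\le \be^m\|\phi\|_{1,e}$ with $\be:=(\gamma/(1+\gamma))^{1/2}<1$, and the well-posedness bound $\|\phi\|_{1,e}\lesssim\|\Vv_{H_c,h_c}\|_{1,e,T\times S}$ from the continuity of $\VQ_\infty$ closes it. For \eqref{eq:decaydiffcorrecT}, the difference $d:=(\VQ_{T\times S,\infty}-\VQ_{T\times S,m})(\Vv_{H_c,h_c})$ is itself handled by Galerkin orthogonality: it lies in $\VW_{H,h}$ and is $\CB$-orthogonal (in the appropriate sense) to $\VW_{H,h}(G_T\times Y_S)$, so choosing the quasi-interpolant of a cut-off of $\phi$ as comparison function in $\VW_{H,h}(G_T\times Y_S)$ and invoking coercivity/continuity bounds $\|d\|_{1,e}$ by the energy of $\phi$ outside a slightly smaller patch, which is $\lesssim\be^m\|\phi\|_{1,e}$ by the decay just proved (possibly with $\be$ and the constants adjusted). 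The two-scale estimate \eqref{eq:decaydiffcorrec} then follows from $(\VQ_\infty-\VQ_m)(\Vv_{H_c,h_c})=\sum_{(T,S_1,S_2)}(\VQ_{T\times S,\infty}-\VQ_{T\times S,m})(\Vv_{H_c,h_c}|_{T\times S})$, squaring, using the finite overlap of the supports of the summands (controlled by $C_{\ol,m,G}$ and $C_{\ol,m,Y}$) together with a Cauchy--Schwarz over the $O(C_{\ol,m})$ overlapping patches, and finally $\sum_{(T,S_1,S_2)}\|\Vv_{H_c,h_c}\|^2_{1,e,T\times S}\le C_{\ol}\,\|\Vv_{H_c,h_c}\|_{1,e}^2$; tracking the two independent overlap constants for the $G$- and $Y$-directions produces the factor $\sqrt{C_{\ol,m,G}}+\sqrt{C_{\ol,m,Y}}$.

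The main obstacle I expect is the interplay of the two scales in the cut-off argument. A single cut-off $\eta(x,y)$ must simultaneously localize in the macroscopic variable on $\CT_{H_c}$-patches and in the cell variable on $\CT_{h_c}$-patches, while the quasi-interpolation operator $\VI_{H_c,h_c}=(I_{H_c},I_{h_c}^{Y^*},I_{h_c}^D)$ acts separately on the three components and only the second variable for the last two; thus the commutator $\VI_{H_c,h_c}(\eta\phi)-\eta\,\VI_{H_c,h_c}(\phi)$ must be estimated componentwise, jumping between the two-scale norms in \eqref{eq:errornorm}--\eqref{eq:errorh1semi} and the elementwise bounds \eqref{eq:intpol}, exactly the ``jumping back and forth'' the introduction warns about. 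A further delicate point is the periodic boundary of $Y$: the annuli $\UN^\ell(S_2)\setminus\UN^{\ell-1}(S_2)$ must be understood on the torus, so the cut-off and its gradient bound must be periodic, and one has to verify that $\eta\phi$ (truncated and interpolated) is still an admissible periodic element of $\VW_{H,h}$; getting the geometry right here, rather than any single estimate, is where the real care is needed, while the rest is a by-now-standard iteration.
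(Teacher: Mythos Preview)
Your outline follows the paper's strategy closely for the decay of the idealized element correctors (the paper's Proposition~\ref{prop:decayQinfty}) and for \eqref{eq:decaydiffcorrecT} via C\'ea's Lemma. Two technical remarks on those parts: first, the paper works with a \emph{triple} of cut-off functions $\Veta=(\eta_0,\eta_1,\eta_2)$, one per component of $\CH$, rather than a single scalar $\eta$; this is what makes the commutator estimates componentwise tractable. Second, since $\Veta\Vphi$ is only piecewise quadratic, the paper inserts the nodal Lagrange interpolation $\VCI_{H,h}$ onto the fine mesh before applying $(1-\VI_{H_c,h_c})$ to land in $\VW_{H,h}$; your phrase ``minus its interpolant'' hides this two-step correction. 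Also, the one-step constant you call $\gamma$ need not be less than $1$; only $\gamma/(1+\gamma)<1$ is guaranteed, as your own formula for $\be$ already reflects.

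The genuine gap is in your argument for \eqref{eq:decaydiffcorrec}. You propose to square the sum $\sum_{(T,S_1,S_2)}\Vz_{T\times S}$ and invoke ``finite overlap of the supports of the summands'', but the summands $\Vz_{T\times S}=(\VQ_{T\times S,\infty}-\VQ_{T\times S,m})\Vv$ are \emph{globally} supported, because $\VQ_{T\times S,\infty}$ is. No finite-overlap bound of the form $\|\sum \Vz_{T\times S}\|^2\le C_{\ol,m}\sum\|\Vz_{T\times S}\|^2$ is available, and a naive triangle inequality would introduce a factor depending on the number of elements, hence on the mesh sizes. The paper circumvents this by first using the coercivity of Lemma~\ref{lem:wellposedcorrec} to write $\|\Vz\|_{1,e}^2\lesssim|\sum_{(T,S_1,S_2)}\CB(\Vz,\Vz_{T\times S})|$, and then exploiting the Galerkin orthogonality $\CB(\Vw,\Vz_{T\times S})=0$ for every $\Vw\in\VW_{H,h}$ supported outside $G_T\times Y_S$: subtracting such a $\Vw$ (built from a cut-off of $\Vz$) localizes the \emph{first} argument of $\CB$ to $\UN^{m+O(1)}(T\times S)$, after which Cauchy--Schwarz over the now genuinely finitely-overlapping local pieces of $\Vz$ yields the factor $\sqrt{C_{\ol,m,G}}+\sqrt{C_{\ol,m,Y}}$. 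In short, it is $\Vz$ that gets localized, not $\Vz_{T\times S}$.
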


The stability of the LOD requires the coupling of the oversampling parameter $m$ to the stability-/inf-sup-constant of the HMM. 
Therefore, we assume that $H$ and $h$ are small enough that \eqref{eq:stabHMM} holds.

\begin{theorem}[Well-posedness of the LOD]
\label{thm:stabilLOD}
Under the resolution conditions \eqref{eq:resolassptcorrec} and \eqref{eq:stabHMM} and the following oversampling condition
\begin{equation}
\label{eq:oversamplingstab}
m\geq \frac{(q\!+\!1)\log(k)\!+\!\log\bigl(2C_2C_{\VI}C_{\VI,e}C_{\HMM}C_B\sqrt{1+ C_{\min}/2}\bigl(\sqrt{C_{\ol, m, G}}\!+\!\sqrt{C_{\ol, m, Y}}\bigr)\bigr)}{|\log(\be)|},
\end{equation}
the two-scale LOD \eqref{eq:LODlocal} is well-posed and with the constant $C_{\LOD}:=2 C_{\HMM}C_{\VI, e}^2(1+C_{\VQ})$ it holds
\[ (C_{\LOD}\,k^{q+1})^{-1}\leq \inf_{\Vv_{H_c, h_c}\in \VV_{H_c, h_c}}\:\sup_{\overline{\Vpsi}_{H_c, h_c}\in \overline{\VV}_{H_c, h_c, m}}\frac{\Re \CB(\Vv_{H_c, h_c}, \overline{\Vpsi}_{H_c, h_c})}{\|\Vv_{H_c, h_c}\|_e\|\overline{\Vpsi}_{H_c, h_c}\|_e}.\]
\end{theorem}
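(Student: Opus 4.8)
The plan is to follow the standard LOD inf-sup argument (as in the one-scale setting of \cite{GP15scatteringPG,P15LODhelmholtz}), adapting the bookkeeping to the coupled two-scale spaces. The key idea is that for a given $\Vv_{H_c,h_c}\in\VV_{H_c,h_c}$ one should \emph{not} test with its own localized corrected function, but rather exploit the stability \eqref{eq:stabHMM} of the direct HMM discretization to produce a good test function in $\VV_{H,h}$, then correct it into $\overline{\VV}_{H_c,h_c,m}$, and control the error committed by localization via Theorem \ref{thm:decaycorrec}. Concretely, I would first observe that the \emph{idealized} Petrov-Galerkin space $(1-\VQ_\infty)(\VV_{H_c,h_c})$ satisfies $\CB(\Vv_{H_c,h_c},(1-\VQ_\infty)\Vpsi_{H,h})=\CB(\VI_{H_c,h_c}(\text{corrected})\dots)$; more precisely, by the defining relation \eqref{eq:idealsubscalecorrec} of $\VQ_\infty$, testing \eqref{eq:LODlocal}-type forms with $(1-\VQ_\infty)\Vpsi$ against a coarse trial function reduces to testing the full discrete form \eqref{eq:discrtwoscaleeq} on $\VV_{H,h}$ — because $\CB(\Vv_{H_c,h_c},\Vw)=\CB_{G\times Y}(\Vv_{H_c,h_c},\Vw)=\CB(\Vv_{H_c,h_c},\VQ_\infty\Vw)$ for $\Vw\in\VW_{H,h}$, so that $\CB(\Vv_{H_c,h_c},(1-\VQ_\infty)\Vpsi_{H,h})$ depends on $\Vpsi_{H,h}$ only through $\VI_{H_c,h_c}\Vpsi_{H,h}$. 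This is the two-scale analogue of the fact that the ideal method reproduces the Galerkin solution on the fine mesh.

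Next I would run the argument quantitatively. Given $\Vv_{H_c,h_c}$, use \eqref{eq:stabHMM} to pick $\Vpsi_{H,h}\in\VV_{H,h}$ with $\Re\CB(\Vv_{H_c,h_c},\Vpsi_{H,h})\geq (C_{\HMM}k^{q+1})^{-1}\|\Vv_{H_c,h_c}\|_e\|\Vpsi_{H,h}\|_e$; WLOG normalize $\|\Vpsi_{H,h}\|_e=\|\Vv_{H_c,h_c}\|_e$. Set $\Vpsi_{H_c,h_c}:=\VI_{H_c,h_c}\Vpsi_{H,h}\in\VV_{H_c,h_c}$, which by \eqref{eq:intpolenergystable} satisfies $\|\Vpsi_{H_c,h_c}\|_e\leq C_{\VI,e}\|\Vpsi_{H,h}\|_e$, and take as test function the \emph{localized} corrected function $\overline{\Vpsi}_{H_c,h_c}:=(1-\VQ_m)\Vpsi_{H_c,h_c}\in\overline{\VV}_{H_c,h_c,m}$, whose energy norm is bounded by $C_{\VI,e}(1+C_{\VQ})$ times $\|\Vv_{H_c,h_c}\|_e$ using continuity of $\VQ_\infty$ and of the localization difference. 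Then split
\begin{equation*}
\CB(\Vv_{H_c,h_c},\overline{\Vpsi}_{H_c,h_c})=\CB(\Vv_{H_c,h_c},(1-\VQ_\infty)\Vpsi_{H_c,h_c})+\CB(\Vv_{H_c,h_c},(\VQ_\infty-\VQ_m)\Vpsi_{H_c,h_c}).
\end{equation*}
For the first term, since $\VI_{H_c,h_c}$ is a projection, $\VI_{H_c,h_c}((1-\VQ_\infty)\Vpsi_{H_c,h_c})=\VI_{H_c,h_c}\Vpsi_{H,h}$ agrees with $\VI_{H_c,h_c}\Vpsi_{H,h}$, and by the reproduction identity above the first term equals $\Re\CB(\Vv_{H_c,h_c},\Vpsi_{H,h})$, hence is $\geq (C_{\HMM}k^{q+1})^{-1}\|\Vv_{H_c,h_c}\|_e^2$. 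For the second term, bound it by $C_B\|\Vv_{H_c,h_c}\|_e\|(\VQ_\infty-\VQ_m)\Vpsi_{H_c,h_c}\|_e$ and invoke \eqref{eq:decaydiffcorrec} together with the norm equivalence of Lemma \ref{lem:wellposedcorrec} (to pass between $\|\cdot\|_e$ and $\|\cdot\|_{1,e}$ on $\VW_{H,h}$) and the interpolation bound on $\|\Vpsi_{H_c,h_c}\|_{1,e}$, obtaining a factor $C_2\sqrt{1+C_{\min}/2}\,(\sqrt{C_{\ol,m,G}}+\sqrt{C_{\ol,m,Y}})\be^m C_B C_{\VI}C_{\VI,e}\|\Vv_{H_c,h_c}\|_e^2$. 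The oversampling condition \eqref{eq:oversamplingstab} is precisely what makes this error term bounded by $\tfrac12(C_{\HMM}k^{q+1})^{-1}\|\Vv_{H_c,h_c}\|_e^2$, so the two terms combine to give $\Re\CB(\Vv_{H_c,h_c},\overline{\Vpsi}_{H_c,h_c})\geq\tfrac12(C_{\HMM}k^{q+1})^{-1}\|\Vv_{H_c,h_c}\|_e^2$. Dividing by $\|\Vv_{H_c,h_c}\|_e\|\overline{\Vpsi}_{H_c,h_c}\|_e$ and collecting constants yields the stated inf-sup bound with $C_{\LOD}=2C_{\HMM}C_{\VI,e}^2(1+C_{\VQ})$; well-posedness then follows because $\dim\overline{\VV}_{H_c,h_c,m}=\dim\VV_{H_c,h_c}$, so the discrete inf-sup condition gives invertibility of the square system \eqref{eq:LODlocal}.

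The main obstacle I expect is the reproduction identity and the careful handling of the interpolation projection in the coupled two-scale setting: one must verify that $\VI_{H_c,h_c}$ really is a projection commuting correctly with the block structure, that $\CB(\Vv_{H_c,h_c},\Vw)$ for a coarse trial function and $\Vw\in\VW_{H,h}$ vanishes appropriately (i.e.\ that the idealized corrector exactly captures the fine-scale response), and that the localization error estimate \eqref{eq:decaydiffcorrec} can be combined with the norm equivalences across the three component spaces — each with its own interpolation constant and its own (possibly periodic) patch geometry — without losing the exponential factor. The secondary technical point is tracking all constants so that the logarithmic oversampling condition \eqref{eq:oversamplingstab} comes out exactly as stated; this is routine but must be done consistently with the $k^{q+1}$ scaling inherited from \eqref{eq:stabHMM} and the polynomial-in-$m$ growth of $C_{\ol,m,G}$, $C_{\ol,m,Y}$, which is harmless against $\be^m$.
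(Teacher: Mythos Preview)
Your overall plan is correct, but the ``reproduction identity'' you invoke is wrong as stated and this breaks Step~4. From the idealized corrector problem \eqref{eq:idealsubscalecorrec} (summed over cells) one has
\[
\CB(\Vw,\, \Vv-\VQ_\infty\Vv)=0\qquad\text{for all }\Vw\in\VW_{H,h},
\]
i.e.\ the corrected \emph{test} functions are $\CB$-orthogonal to the kernel in the \emph{first} slot. This does yield $(1-\VQ_\infty)\Vpsi_{H_c,h_c}=(1-\VQ_\infty)\Vpsi_{H,h}$, but it does \emph{not} give $\CB(\Vv_{H_c,h_c},\VQ_\infty\Vpsi_{H,h})=0$: here the coarse function sits in the first slot and the kernel element in the second, which is the opposite orientation from the corrector orthogonality. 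Since $\CB$ is not Hermitian (boundary term $-ik(\cdot,\cdot)_{\partial G}$), no symmetry rescues this. Consequently your chain
\[
\CB(\Vv_{H_c,h_c},(1-\VQ_\infty)\Vpsi_{H_c,h_c})\;\stackrel{?}{=}\;\CB(\Vv_{H_c,h_c},\Vpsi_{H,h})
\]
fails, and the lower bound you derive from \eqref{eq:stabHMM} applied to $\Vv_{H_c,h_c}$ does not transfer to the first term of your splitting.

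The paper fixes exactly this by passing to the \emph{adjoint} corrector: it sets $\widetilde\Vv:=\Vv_{H_c,h_c}-(\VQ_\infty(\Vv_{H_c,h_c}^*))^*$, which satisfies $\CB(\widetilde\Vv,\Vw)=0$ for all $\Vw\in\VW_{H,h}$, and applies the fine inf-sup \eqref{eq:stabHMM} to $\widetilde\Vv$ (not to $\Vv_{H_c,h_c}$) to obtain $\Vpsi\in\VV_{H,h}$. One then checks the genuine identity
\[
\CB(\Vv_{H_c,h_c},(1-\VQ_\infty)\VI_{H_c,h_c}\Vpsi)=\CB(\widetilde\Vv,\Vpsi),
\]
and recovers $\|\Vv_{H_c,h_c}\|_e\le C_{\VI,e}\|\widetilde\Vv\|_e$ via $\VI_{H_c,h_c}\widetilde\Vv=\Vv_{H_c,h_c}$ and \eqref{eq:intpolenergystable}. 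This extra step is precisely where the second factor $C_{\VI,e}$ in $C_{\LOD}=2C_{\HMM}C_{\VI,e}^2(1+C_{\VQ})$ comes from; with your (incorrect) route only one $C_{\VI,e}$ would appear, which is another signal that a piece is missing. The remainder of your argument---bounding the localization term via Theorem~\ref{thm:decaycorrec} and Lemma~\ref{lem:wellposedcorrec}, and absorbing it with condition~\eqref{eq:oversamplingstab}---is correct and matches the paper.
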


As $C_{\ol, m, G}$ and $C_{\ol, m, Y}$ grow at most polynomially with $m$ because of the quasi-uniformity of $\CT_{H_c}$ and $\CT_{h_c}$, condition \eqref{eq:oversamplingstab} is indeed satisfiable and the choice of the oversampling parameter $m$ will be dominated by the logarithm of the wave number.

\begin{proof}
Let $\Vv_{H_c, h_c}\in \VV_{H_c, h_c}$ be given. From \eqref{eq:stabHMM} we infer that there is $\Vpsi\in \VV_{H,h}$ such that
\[\Re \CB(\Vv_{H_c,h_c}-(\VQ_\infty(\Vv_{H_c,h_c}^*))^*, \Vpsi)\geq (C_{\HMM}^{-1}k^{-(q+1)})\|\Vv_{H_c,h_c}-(\VQ_\infty(\Vv_{H_c,h_c}^*))^*\|_e\, \|\Vpsi\|_e.\]
It follows from the structure of the sesquilinear form $\CB$ that $(\VQ_\infty(\Vv_{H_c,h_c}^*))^*$ solves the following adjoint corrector problem 
\[\CB((\VQ_\infty(\Vv_{H_c,h_c}^*))^*, \Vw)=\CB(\Vv_{H_c,h_c}, \Vw)\qquad \forall \Vw\in \VW_{H,h}.\]
Let $\overline{\Vpsi}_{H_c, h_c}:=(1-\VQ_m)\VI_{H_c, h_c}\Vpsi\in \overline{\VV}_{H_c, h_c, m}$. It obviously holds that
\begin{equation}
\label{eq:CBforLOD}
\CB(\Vv_{H_c, h_c}, \overline{\Vpsi}_{H_c, h_c})=\CB(\Vv_{H_c, h_c}, (1-\VQ_\infty)\VI_{H_c, h_c}\Vpsi)
+\CB(\Vv_{H_c, h_c}, (\VQ_\infty-\VQ_m)\VI_{H_c, h_c}\Vpsi).
\end{equation}
Since $\VQ_\infty$ is a projection onto $\VW_{H, h}$ and $(1-\VI_{H_c, h_c})\Vpsi\in \VW_{H,h}$, we have $(1-\VQ_\infty)(1-\VI_{H_c, h_c})\Vpsi=0$ and thus, $(1-\VQ_\infty)\VI_{H_c, h_c}\Vpsi=(1-\VQ_\infty)\Vpsi$. The solution property of $(\VQ_\infty(\Vv_{H_c, h_c}^*))^*$ and the definition of $\VQ_\infty$ in \eqref{eq:idealsubscalecorrec}--\eqref{eq:Qinfty} gives
\begin{equation*}
\begin{split}
\CB((\VQ_\infty(\Vv_{H_c, h_c}^*))^*, \Vpsi)
&=\CB((\VQ_\infty(\Vv_{H_c, h_c}^*))^*, \VQ_\infty\Vpsi)+\CB((\VQ_\infty(\Vv_{H_c, h_c}^*))^*, (1-\VQ_\infty)\Vpsi)\\
&=\CB(\Vv_{H_c, h_c}, \VQ_\infty \Vpsi).
\end{split}
\end{equation*}
Hence, we obtain
\begin{align*}
\Re\CB(\Vv_{H_c, h_c}, (1-\VQ_\infty)\VI_{H_c, h_c}\Vpsi)&=\Re\CB(\Vv_{H_c, h_c}-(\VQ_\infty(\Vv_{H_c, h_c}^*))^*, \Vpsi)\\
&\geq (C_{\HMM}\,k^{q+1})^{-1}\,\|\Vv_{H_c, h_c}-(\VQ_\infty(\Vv_{H_c, h_c}^*))^*\|_e\, \|\Vpsi\|_e.
\end{align*}
Furthermore, the estimate \eqref{eq:intpolenergystable} implies
\[\|\Vv_{H_c, h_c}\|_e=\|\VI_{H_c, h_c}(\Vv_{H_c, h_c}-(\VQ_\infty(\Vv_{H_c, h_c}^*))^*)\|_e\leq C_{\VI, e}\|\Vv_{H_c, h_c}-(\VQ_\infty(\Vv_{H_c, h_c}^*))^*\|_e\]
and
\[\|\overline{\Vpsi}_{H_c, h_c}\|_e\leq C_{\VI, e}(1+C_{\VQ})\,\|\Vpsi\|_e.\]
The second term on the right-hand side of \eqref{eq:CBforLOD} satisfies with Lemma \ref{lem:wellposedcorrec} and Theorem \ref{thm:decaycorrec} that
\begin{align*}
&\!\!\!\!|\CB(\Vv_{H_c, h_c}, (\VQ_\infty-\VQ_m)\VI_{H_c, h_c}\Vpsi)|\\*
&\leq \sqrt{1+ C_{\min}/2}\;C_B\,\|(\VQ_\infty-\VQ_m)\VI_{H_c, h_c}\Vpsi\|_{1,e}\, \|\Vv_{H_c, h_c}\|_e\\
&\leq \sqrt{1+C_{\min}/2}\; C_BC_2 \bigl(\sqrt{C_{\ol,m,G}}+\sqrt{C_{\ol,m,Y}}\bigr) \be^m C_{\VI}\, \|\Vpsi\|_e\, \|\Vv_{H_c, h_c}\|_e.
\end{align*}
Altogether, this  yields
\begin{align*}
&\!\!\!\!\Re\CB(\Vv_{H_c, h_c}, \overline{\Vpsi}_{H_c, h_c})\\*
&\geq \Bigl(\frac{1}{C_{\VI, e}C_{\HMM}k^4}-\sqrt{1+C_{\min}/2}\;C_BC_2 \bigl(\sqrt{C_{ol,m,G}}+\sqrt{C_{ol,m,Y}}\bigr) \be^mC_{\VI} \Bigr)\\*
&\qquad\cdot\|\Vv_{H_c, h_c}\|_e\,\|\Vpsi\|_e\\
&\geq \Bigl(\frac{1}{C_{\VI, e}C_{\HMM}k^4}-\sqrt{1+C_{\min}/2}\;C_BC_2 \bigl(\sqrt{C_{ol,m,G}}+\sqrt{C_{ol,m,Y}}\bigr) \be^mC_{\VI} \Bigr)\\*
&\qquad \cdot\frac{1}{C_{\VI, e}(1+C_{\VQ})}\, \|\Vv_{H_c, h_c}\|_e\, \|\overline{\Vpsi}_{H_c, h_c}\|_e.
\end{align*}
Hence, the condition \eqref{eq:oversamplingstab} implies the assertion.
\end{proof}

\begin{remark}[Adjoint problem]
\label{rem:adjointLOD}
Under the assumption of Theorem \ref{thm:stabilLOD}, problem \eqref{eq:LODlocal} is well-posed. Thus, it follows from a dimension argument that also the adjoint problem to \eqref{eq:LODlocal} is well-posed with the same stability constant as in Theorem \ref{thm:stabilLOD}, cf.\ \cite[Remark 1]{GP15scatteringPG}.
\end{remark}

\begin{theorem}[Quasi-optimality]
\label{thm:quasioptLOD}
Under the resolution conditions \eqref{eq:resolassptcorrec} and \eqref{eq:stabHMM} and the oversampling conditions \eqref{eq:oversamplingstab} and 
\begin{equation}
\label{eq:oversamplingquaisopt}
m\geq \bigl((q+1)\log(k)+\log(2\sqrt{1+C_{\min}/2}\; C_B^2 C_2C_{\LOD})\bigr)/|\log(\be)|,
\end{equation} 
the LOD-approximation $\Vu_{H_c, h_c}$, solution to \eqref{eq:LODlocal},  and the solution $\Vu_{H,h}$ of the direct discretization \eqref{eq:discrtwoscaleeq} satisfy
\begin{equation*}
\|\Vu_{H, h}-\Vu_{H_c, h_c}\|_e\leq C\min_{\Vv_{H_c, h_c}\in \VV_{H_c, h_c}}\|\Vu_{H, h}-\Vv_{H_c, h_c}\|_e
\end{equation*}
with a generic constant $C$ depending only on $C_{\VI, e}$.
\end{theorem}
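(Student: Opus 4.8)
The plan is to follow the standard Petrov–Galerkin quasi-optimality argument for the LOD, adapted to the two-scale energy norm. The main ingredients are already in place: the discrete inf-sup stability of the LOD (Theorem~\ref{thm:stabilLOD}), continuity and Gårding inequality of $\CB$ (Lemma~\ref{lem:propertiesCB}), the stability of $\VI_{H_c,h_c}$ in the energy norm \eqref{eq:intpolenergystable}, and the exponential decay estimate \eqref{eq:decaydiffcorrec}. First I would record the Galerkin orthogonality: since $\Vu_{H,h}$ solves \eqref{eq:discrtwoscaleeq} and $\Vu_{H_c,h_c}$ solves \eqref{eq:LODlocal}, and the test space $\overline{\VV}_{H_c,h_c,m}\subset\VV_{H,h}$ has a modified right-hand side matching the same boundary functional, one gets $\CB(\Vu_{H,h}-\Vu_{H_c,h_c},\overline{\Vpsi}_{H_c,h_c})=0$ for all $\overline{\Vpsi}_{H_c,h_c}\in\overline{\VV}_{H_c,h_c,m}$ — here one must be a little careful because the LOD right-hand side is $(g,\overline{\psi}_{H_c})_{\partial G}$ while the direct one is $(g,\psi_H)_{\partial G}$, so the orthogonality is really against the first component of the test function; but $\overline{\psi}_{H_c}$ is exactly the first component of $\overline{\Vpsi}_{H_c,h_c}$, so this is consistent.

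The core of the argument is then the usual Strang-type split. Fix an arbitrary $\Vv_{H_c,h_c}\in\VV_{H_c,h_c}$ and write $\Vu_{H,h}-\Vu_{H_c,h_c}=(\Vu_{H,h}-\Vv_{H_c,h_c})+(\Vv_{H_c,h_c}-\Vu_{H_c,h_c})$. Apply the inf-sup condition of Theorem~\ref{thm:stabilLOD} to $\Vv_{H_c,h_c}-\Vu_{H_c,h_c}\in\VV_{H_c,h_c}$: there is $\overline{\Vpsi}_{H_c,h_c}\in\overline{\VV}_{H_c,h_c,m}$ with $\|\Vv_{H_c,h_c}-\Vu_{H_c,h_c}\|_e\le C_{\LOD}k^{q+1}\Re\CB(\Vv_{H_c,h_c}-\Vu_{H_c,h_c},\overline{\Vpsi}_{H_c,h_c})/\|\overline{\Vpsi}_{H_c,h_c}\|_e$. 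Using Galerkin orthogonality to replace $\Vv_{H_c,h_c}-\Vu_{H_c,h_c}$ by $\Vv_{H_c,h_c}-\Vu_{H,h}$ in the numerator, then applying continuity of $\CB$ (constant $C_B$), yields $\|\Vv_{H_c,h_c}-\Vu_{H_c,h_c}\|_e\le C_{\LOD}C_Bk^{q+1}\|\Vu_{H,h}-\Vv_{H_c,h_c}\|_e$. A triangle inequality then gives $\|\Vu_{H,h}-\Vu_{H_c,h_c}\|_e\le(1+C_{\LOD}C_Bk^{q+1})\|\Vu_{H,h}-\Vv_{H_c,h_c}\|_e$, and taking the infimum over $\Vv_{H_c,h_c}$ finishes a version of the claim — but with a $k$-dependent constant, which is not what the statement asserts.

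The subtlety, and the reason for the extra oversampling condition \eqref{eq:oversamplingquaisopt}, is that one must get rid of the factor $k^{q+1}$ and obtain a constant depending only on $C_{\VI,e}$. To do this I would not use a raw continuity bound on the orthogonality term; instead, following \cite{GP15scatteringPG,P15LODhelmholtz}, I would choose the comparison element $\Vv_{H_c,h_c}$ cleverly — namely take $\Vv_{H_c,h_c}=\VI_{H_c,h_c}\Vu_{H,h}$, so that $\Vu_{H,h}-\Vv_{H_c,h_c}\in\VW_{H,h}$ and hence $(1-\VQ_\infty)(\Vu_{H,h}-\Vv_{H_c,h_c})=0$, i.e.\ $\Vu_{H,h}-\Vv_{H_c,h_c}=\VQ_\infty(\Vu_{H,h}-\Vv_{H_c,h_c})$. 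Then in the numerator $\Re\CB(\Vv_{H_c,h_c}-\Vu_{H,h},\overline{\Vpsi}_{H_c,h_c})=-\Re\CB(\VQ_\infty(\Vu_{H,h}-\Vv_{H_c,h_c}),(1-\VQ_m)\Vz)$ for $\overline{\Vpsi}_{H_c,h_c}=(1-\VQ_m)\Vz$ with $\Vz\in\VV_{H_c,h_c}$. Splitting $(1-\VQ_m)=(1-\VQ_\infty)+(\VQ_\infty-\VQ_m)$, the $(1-\VQ_\infty)$ piece pairs a $\VW_{H,h}$-element (a correction) against $(1-\VQ_\infty)\Vz$; by the defining orthogonality \eqref{eq:idealsubscalecorrec} of the correctors the element $\VQ_\infty(\cdot)$ is tested only by things in $\VW_{H,h}$, and one shows this contribution is small — and the remaining $(\VQ_\infty-\VQ_m)$ piece is controlled by the exponential decay \eqref{eq:decaydiffcorrec} together with Lemma~\ref{lem:wellposedcorrec} and continuity, giving a factor $\beta^m$ which, under \eqref{eq:oversamplingquaisopt}, beats the $k^{q+1}C_{\LOD}$ growth and leaves an absolute constant. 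The hard part will be organizing this last manipulation: carefully tracking that the inf-sup test function $\overline{\Vpsi}_{H_c,h_c}$ can be taken of the form $(1-\VQ_m)\VI_{H_c,h_c}\Vpsi$, using the projection property $\VI_{H_c,h_c}\circ\VI_{H_c,h_c}=\VI_{H_c,h_c}$ and $\VQ_\infty$ being a projection onto $\VW_{H,h}$ to collapse the "ideal" terms exactly as in the proof of Theorem~\ref{thm:stabilLOD}, and bounding the residual decay term so that the surviving constant is controlled by $C_{\VI,e}$ alone. The two-scale norms introduce no new difficulty here beyond bookkeeping, since all the needed norm equivalences ($\|\cdot\|_{1,e}$ vs.\ $\|\cdot\|_e$ on $\VW_{H,h}$) are provided by Lemma~\ref{lem:wellposedcorrec}.
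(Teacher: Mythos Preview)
Your approach is correct and reaches the same conclusion, but it is organized differently from the paper's proof. The paper proceeds by a \emph{duality argument}: it sets $\Ve=\Vu_{H,h}-\Vu_{H_c,h_c}$, splits $\|\Ve\|_e\le\|(1-\VI_{H_c,h_c})\Vu_{H,h}\|_e+\|\VI_{H_c,h_c}\Ve\|_e$, and then bounds $\|\VI_{H_c,h_c}\Ve\|_e$ by introducing an auxiliary adjoint solution $\Vz_{H_c,h_c}\in\VV_{H_c,h_c}$ of $\CB(\Vpsi_{H_c,h_c},(1-\VQ_\infty)\Vz_{H_c,h_c})=(\Vpsi_{H_c,h_c},\VI_{H_c,h_c}\Ve)_e$, whose stability comes from Remark~\ref{rem:adjointLOD}. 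Testing with $\VI_{H_c,h_c}\Ve$ and splitting $(1-\VQ_\infty)=(\VQ_m-\VQ_\infty)+(1-\VQ_m)$, the $(1-\VQ_m)$-term is rewritten via Galerkin orthogonality and the identity $\CB(\VW_{H,h},(1-\VQ_\infty)\cdot)=0$, and both remaining terms are controlled by the decay estimate \eqref{eq:decaydiffcorrec}, yielding a bound that can be absorbed under \eqref{eq:oversamplingquaisopt}.

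You instead use a \emph{direct primal inf--sup argument}: apply Theorem~\ref{thm:stabilLOD} to $\VI_{H_c,h_c}\Vu_{H,h}-\Vu_{H_c,h_c}$, use Galerkin orthogonality to replace the trial argument by $\Vw:=\VI_{H_c,h_c}\Vu_{H,h}-\Vu_{H,h}\in\VW_{H,h}$, and split the test function $(1-\VQ_m)\Vz=(1-\VQ_\infty)\Vz+(\VQ_\infty-\VQ_m)\Vz$. One point to sharpen: the contribution $\CB(\Vw,(1-\VQ_\infty)\Vz)$ is not merely ``small'' but \emph{exactly zero}, since summing \eqref{eq:idealsubscalecorrec} gives $\CB(\Vw',\VQ_\infty\Vz)=\CB(\Vw',\Vz)$ for every $\Vw'\in\VW_{H,h}$. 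The remaining $(\VQ_\infty-\VQ_m)$ term is then bounded by \eqref{eq:decaydiffcorrec} together with $\|\Vz\|_e=\|\VI_{H_c,h_c}\overline{\Vpsi}\|_e\le C_{\VI,e}\|\overline{\Vpsi}\|_e$, and the factor $C_{\LOD}k^{q+1}\beta^m$ is absorbed by the oversampling condition. Your route avoids the auxiliary dual problem and is a bit shorter; the paper's duality argument is closer in spirit to Aubin--Nitsche and makes the absorption step symmetric in the two pieces. Both proofs rest on the same three pillars: Galerkin orthogonality, the vanishing of $\CB(\VW_{H,h},(1-\VQ_\infty)\cdot)$, and Theorem~\ref{thm:decaycorrec}.
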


\begin{proof}
Let $\Ve:=\Vu_{H,h}-\Vu_{H_c, h_c}$. We prove that $\|\Ve\|_e\leq 2\|(1-\VI_{H_c, h_c})\Vu_{H,h}\|_e$, which gives the assertion because $\VI_{H_c, h_c}$ is a projection.
By the triangle inequality and the fact that $\VI_{H_c, h_c}$ is a projection onto $\VV_{H_c, h_c}$, we obtain
\[\|\Ve\|_e\leq \|(1-\VI_{H_c, h_c})\Vu_{H, h}\|_e+\|\VI_{H_c, h_c}\Ve\|_e,\]
so that it only remains to bound the second term on the right-hand side.
The proof employs a standard duality argument, the stability of the idealized method and the fact that the actual two-scale LOD can be seen as a perturbation of the idealized method.
Let $\Vz_{H_c, h_c}\in \VV_{H_c, h_c}$ be the solution to the dual problem
\begin{align*}
\CB(\Vpsi_{H_c, h_c}, (1-\VQ_\infty)\Vz_{H_c, h_c})&=(\Vpsi_{H_c, h_c}, \VI_{H_c, h_c}\Ve)_e \qquad \forall \Vpsi_{H_c, h_c}\in \VV_{H_c, h_c},
\end{align*}
where $(\cdot, \cdot)_e$ denotes the scalar product which induces the two-scale energy norm \eqref{eq:errornorm}. This adjoint problem is uniquely solvable as explained in Remark \ref{rem:adjointLOD}.
Choosing the test function $\Vpsi_{H_c, h_c}=\VI_{H_c, h_c}\Ve$ implies
\begin{equation}
\label{eq:Pie}
\begin{split}
\|\VI_{H_c, h_c}\Ve\|_e^2&=\CB(\VI_{H_c, h_c}\Ve, (1-\VQ_\infty)\Vz_{H_c, h_c})\\
&=\CB(\VI_{H_c, h_c}\Ve, (\VQ_m-\VQ_\infty)\Vz_{H_c, h_c})+\CB(\VI_{H_c, h_c}\Ve, (1-\VQ_m)\Vz_{H_c, h_c}).
\end{split}
\end{equation}
Since $(1-\VQ_m)\Vz_{H_c, h_c}\in \overline{\VV}_{H_c, h_c, m}$ by definition, we have the Galerkin orthogonality
\[\CB(\Vu_{H,h}-\Vu_{H_c, h_c}, (1-\VQ_m)\Vz_{H_c, h_c})=0.\]
Using this orthogonality and the fact that $\VI_{H_c, h_c}\Vu_{H, h}-\Vu_{H, h}\in \VW_{H,h}$ together with the definition of $\VQ_\infty$ \eqref{eq:idealsubscalecorrec}--\eqref{eq:Qinfty} implies for the second term
\begin{align*}
\CB(\VI_{H_c, h_c}\Ve, (1-\VQ_m)\Vz_{H_c, h_c})&=\CB(\VI_{H_c, h_c}\Vu_{H,h}-\Vu_{H,h}, (1-\VQ_m)\Vz_{H_c, h_c})\\
&=\CB(\VI_{H_c, h_c}\Vu_{H,h}-\Vu_{H,h}, (\VQ_\infty-\VQ_m)\Vz_{H_c, h_c}).
\end{align*}
Now the first and the (modified) second term of \eqref{eq:Pie} are similar and can be treated with the same procedure. First, we note that $(\VQ_\infty-\VQ_m)\Vz_{H_c, h_c}\in \VW_{H,h}$. Applying Lemma \ref{lem:wellposedcorrec} and then the decay estimate \eqref{eq:decaydiffcorrec} from Theorem \ref{thm:decaycorrec}, we obtain (for the second term)
\begin{align*}
&\!\!\!\!|\CB(\VI_{H_c, h_c}\Vu_{H, h}-\Vu_{H,h}, (\VQ_\infty-\VQ_m)\Vz_{H_c, h_c})|\\*
&\leq \sqrt{1+C_{\min}/2}\;C_B\|(1-\VI_{H_c, h_c})\Vu_{H,h}\|_e\, \|(\VQ_\infty-\VQ_m)\Vz_{H_c, h_c}\|_{1,e}\\*
&\leq \sqrt{1+C_{\min}/2}\;C_BC_2 \bigl(\sqrt{C_{ol,m,G}}+\sqrt{C_{ol,m,Y}}\bigr) \be^m\, \|(1-\VI_{H_c, h_c})\Vu_{H,h}\|_e\, \|\Vz_{H_c, h_c}\|_{1,e}.
\end{align*}
The stability of the adjoint problem from Remark \ref{rem:adjointLOD} implies
\[\|\Vz_{H_c, h_c}\|_{1,e}\leq C_{\LOD}k^{q+1}C_B\|\VI_{H_c, h_c}\Ve\|_e.\]
Thus, we obtain for \eqref{eq:Pie} after division by $\|\VI_{H_c,h_c}\Ve\|_e$ that
\begin{align*}
\|\VI_{H_c, h_c}\Ve\|_e&\leq \sqrt{1+C_{\min}/2}\;C_B^2C_2 \bigl(\sqrt{C_{ol,m,G}}+\sqrt{C_{ol,m,Y}}\bigr) \be^m C_{\LOD}k^{q+1}\\
&\qquad \cdot(\|(1-\VI_{H_c, h_c})\Vu_{H,h}\|_e+\|\VI_{H_c, h_c}\Ve\|_e).
\end{align*}
The oversampling condition \eqref{eq:oversamplingquaisopt} implies that the constants can be bounded by $1/2$ and hence, the term $\|\VI_{H_c, h_c}\Ve\|_e$ can be absorbed on the left-hand side. 
\end{proof}

\begin{corollary}[Full error]
Let $\Vu:=(u, u_1, u_2)\in \CH$ be the two-scale solution to \eqref{eq:twoscaleeq}. Under the assumptions of Theorem \ref{thm:quasioptLOD}, the two-scale LOD-approximation $\Vu_{H_c, h_c}$, solution to \eqref{eq:LODlocal}, satisfies with some generic constant $C$
\[\|\Vu-\Vu_{H_c, h_c}\|_e\leq \|\Vu-\Vu_{H,h}\|_e+C\min_{\Vv_{H_c, h_c}\in \VV_{H_c, h_c}}\|\Vu_{H, h}-\Vu_{H_c, h_c}\|_e.\]
\end{corollary}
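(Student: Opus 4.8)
\begin{proof}
The plan is to combine a single application of the triangle inequality with the quasi-optimality estimate of Theorem~\ref{thm:quasioptLOD}. Since $\|\cdot\|_e$ is a norm on $\CH$ and both $\Vu_{H,h}$ and $\Vu_{H_c,h_c}$ belong to $\CH$, inserting the discrete reference solution $\Vu_{H,h}$ between $\Vu$ and $\Vu_{H_c,h_c}$ gives
\[\|\Vu-\Vu_{H_c,h_c}\|_e \le \|\Vu-\Vu_{H,h}\|_e + \|\Vu_{H,h}-\Vu_{H_c,h_c}\|_e.\]
The first summand is the error between the analytical two-scale solution of \eqref{eq:twoscaleeq} and the direct discretization \eqref{eq:discrtwoscaleeq}, and it is kept as is. For the second summand we invoke Theorem~\ref{thm:quasioptLOD}: its hypotheses---the resolution conditions \eqref{eq:resolassptcorrec} and \eqref{eq:stabHMM} together with the oversampling conditions \eqref{eq:oversamplingstab} and \eqref{eq:oversamplingquaisopt}---are exactly the assumptions carried over here, so
\[\|\Vu_{H,h}-\Vu_{H_c,h_c}\|_e \le C\min_{\Vv_{H_c,h_c}\in \VV_{H_c,h_c}}\|\Vu_{H,h}-\Vv_{H_c,h_c}\|_e\]
with a generic constant $C$ depending only on $C_{\VI,e}$. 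Substituting this bound into the triangle inequality above yields the claimed estimate.
\end{proof}

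The statement contains no genuine obstacle: it is a bookkeeping consequence of the already established quasi-optimality result, the only nontrivial input being Theorem~\ref{thm:quasioptLOD} itself. If one wished to push the analysis further, the remaining two contributions could be estimated separately by standard arguments---the discretization error $\|\Vu-\Vu_{H,h}\|_e$ via the stability \eqref{eq:stabHMM}, a C\'ea-type estimate, and the regularity of $\Vu$ provided by \eqref{eq:stabiltwosc}; and the coarse best-approximation error $\min_{\Vv_{H_c,h_c}}\|\Vu_{H,h}-\Vv_{H_c,h_c}\|_e$ via interpolation estimates on $\CT_{H_c}$ and $\CT_{h_c}$---but these refinements lie beyond the present statement and are not needed for its proof.
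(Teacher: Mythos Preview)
Your proof is correct and is exactly the argument the paper has in mind: the corollary is stated without proof because it follows immediately from the triangle inequality and Theorem~\ref{thm:quasioptLOD}, and your write-up spells this out. (Note that the displayed inequality in the corollary has a typo---the minimand should read $\|\Vu_{H,h}-\Vv_{H_c,h_c}\|_e$---and your version is the intended one.)
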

If the error for the direct (HMM-)approximation is small (which is the case for sufficiently small $H$, $h$), the error is dominated by the best approximation error of $\VV_{H_c, h_c}$, which can be quantified using standard interpolation operators and regularity results.

\section{Proof of the decay property for the correctors}
\label{sec:decay}
In this section, we give a proof of the exponential decay result of Theorem \ref{thm:decaycorrec}, which is central for this method. 
The idea of the proof is the same as in the previous proofs for the Helmholtz equation \cite{BGP15hethelmholtzLOD,GP15scatteringPG,P15LODhelmholtz} or in the context of diffusion problems \cite{HMP14partunity,MP14LODelliptic}. 
As in the previous sections, we have to take into account the two-scale nature of the problem and the spaces.

Let $\VCI_{H,h}:=(\CI_{H}, \CI_h^{Y^*}, \CI_h^D)$ with $\CI_{H}: C^0(G)\to V_H^1$, $\CI_{h}^{Y^*}: L^2(\Om; C^0(Y^*))\to L^2(\Om; \widetilde{V}_h(Y^*))$, and $\CI_h^D: L^2(\Om; C^0(D))\to L^2(\Om; V_h^1(D))$ denote the nodal Lagrange interpolation operators, where $\CI_h^{Y^*}$ and $\CI_H^D$ only act on the second variable.
We note that periodicity is preserved when identifying degrees of freedom on the periodic boundary.
Recall that the nodal Lagrange interpolation operator $\CI$ is $L^2$- and $H^1$-stable on piece-wise polynomials on shape regular meshes due to inverse inequalities.
Hence, for any $(T, S_1, S_2)\in \CT_{H_c}\times \CT_{h_c}(Y^*)\times \CT_{h_c}(D)$ and all $\Vq\in \pz_2(T)\times L^2(T; \pz_2(S_1))\times L^2(T; \pz_2(S_2))$ we have the stability estimate
\begin{equation}
\label{eq:nodalintpolh1stable}
\|\VCI_{H,h}\Vq\|_{1, e, T\times S}\leq C_{\VCI}\,\|\Vq\|_{1,e, T\times S}.
\end{equation} 
In this section, we do not explicitly give the constants in the estimates. Instead we use a generic constant $C$, which is independent of the mesh sizes and the oversampling parameter, but may depend on the (quasi)-interpolation operators' norms, the overlap constants $C_{\ol, G}$ and $C_{\ol, Y}$ (not on $C_{\ol, m, G}$ and $C_{\ol, m, Y}$!), the constant for the cut-off functions (see below), and $C_{\min}$.

In the proofs, we will frequently use cut-off functions. We collect some basic properties in the following lemma, cf.\ also \cite[Appendix A, Lemma 2]{GP15scatteringPG}.
\begin{lemma}
Let $\Veta:=(\eta_0, \eta_1, \eta_2)\in \VV_{H_c, h_c}$ be a function triple with $\eta_i$, $i=1,2,3$, having values in the interval $[0,1]$ and satisfying the bounds
\begin{equation}
\label{eq:gradeta}
\|\nabla \eta_0\|_{L^\infty(G)}\leq C_{\eta}H_c^{-1}\qquad \text{and}\qquad \|\nabla_y \eta_i\|_{L^2(\Om, L^\infty(Y))}\leq C_{\eta}h_c^{-1} \quad i=1,2.
\end{equation}
By writing $\nabla \Veta$, we mean the function triple $(\nabla \eta_0, \nabla_y \eta_1, \nabla_y\eta_2)$.
Let $\Vw:=(w, w_1, w_2)\in \VW_{H,h}$ be arbitrary and define $\Veta\Vw:=(\eta_0 w, \eta_1 w_1, \eta_2 w_2)$.
Given any subset $(\CK_0, \CK_1, \CK_2)\subset \CT_{H_c}\times \CT_{h_c}(Y^*)\times \CT_{h_c}(D)$, $\Vw$ fulfills for $\CS_i=\cup\CK_i$ and $\CS:=\CS_0\times (\CS_1\cup\CS_2)$ that
\begin{align}
\label{eq:L2gradW}
\|\Vw\|_{\CS}&\leq C( H_c\|\nabla w\|_{\UN(\CS_0)}\!+\sum_{i=1}^2h_c\|\nabla_y w_i\|_{\CS_0\times \UN(\CS_i)}),\\
\label{eq:nablaeta}
\|\Veta \Vw\|_{1,e, \CS}&\leq C (\| \Vw\|_{1, e, \CS\cap \supp(\Veta)}+\|\Vw\|_{1, e,\UN(\CS\cap \supp(\nabla \Veta))}),\\
\label{eq:L2IHcIh}
\|(1-\VI_{H_c, h_c})\VCI_{H,h}(\Veta \Vw)\|_{\CS}&\leq C(H_c+h_c)\|\Veta\Vw\|_{1, e, \UN(\CS)}.
\end{align}
\end{lemma}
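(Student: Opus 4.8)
The lemma collects three standard cut-off estimates, so the plan is to prove each of the three displayed inequalities \eqref{eq:L2gradW}, \eqref{eq:nablaeta}, \eqref{eq:L2IHcIh} separately, in each case reducing the two-scale statement to the one-scale building blocks already available in the paper (the interpolation estimates \eqref{eq:intpol}, the nodal interpolation stability \eqref{eq:nodalintpolh1stable}, and the definition of the energy norm \eqref{eq:errornorm}, \eqref{eq:errorh1semi}). Since all three function triples are treated component-by-component, the main work is bookkeeping: one writes every two-scale norm as the sum of its three contributions (the $(w,w_1)$-block over $\UN(\CS)\cap\Om\times Y^*$ and the $w_2$-block over $\UN(\CS)\times D$, plus the $k^2$ term where present), applies the corresponding one-scale estimate on each block, and recombines. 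I expect no single step to be a serious obstacle; the only genuinely delicate point is handling the periodic boundary of $Y$ correctly, as flagged in the introduction, so that the neighborhood operator $\UN$ is consistently interpreted on the torus for the $Y^*$-component.

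For \eqref{eq:L2gradW}: because $\Vw\in\VW_{H,h}$, each scalar component lies in the kernel of the associated (single) quasi-interpolation operator, i.e. $w=w-I_{H_c}(w)$ on each $T\in\CK_0$, $w_1=w_1-I_{h_c}^{Y^*}(w_1)$ on each $T\times S_1$, and $w_2=w_2-I_{h_c}^D(w_2)$ on each $T\times S_2$. Applying the first, second, and third line of \eqref{eq:intpol} respectively, summing over the elements of $\CK_0,\CK_1,\CK_2$, and using that the patches $\UN(T),\UN(S_i)$ have bounded overlap, gives $\|w\|_{\CS_0}\le C H_c\|\nabla w\|_{\UN(\CS_0)}$ and $\|w_i\|_{\CS_0\times\CS_i}\le C h_c\|\nabla_y w_i\|_{\CS_0\times\UN(\CS_i)}$; adding these (and bounding $\|\Vw\|_{\CS}$ by the sum of the component $L^2$-norms) yields the claim.

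For \eqref{eq:nablaeta}: expand $\nabla_y(\eta_i w_i)=\eta_i\nabla_y w_i+w_i\nabla_y\eta_i$ (and similarly $\nabla(\eta_0 w)$) and use the product structure of $\|\cdot\|_{1,e,\CS}$. The terms with $\eta_i$ are bounded pointwise by $|\nabla w_i|$ on $\supp(\Veta)$, giving the first term on the right. The terms with $\nabla\eta_i$ are supported in $\supp(\nabla\Veta)$ and satisfy $\|w_i\nabla_y\eta_i\|\le C_\eta h_c^{-1}\|w_i\|_{\CS_0\times(\CS_i\cap\supp(\nabla\eta_i))}$ (resp.\ $H_c^{-1}$ for $w$); here one applies \eqref{eq:L2gradW} — already proved — on the relevant sub-patch to convert the $h_c^{-1}\|w_i\|$ and $H_c^{-1}\|w\|$ back into $\|\nabla_y w_i\|$ and $\|\nabla w\|$ over $\UN(\CS\cap\supp(\nabla\Veta))$, absorbing the $H_c^{-1}h_c$, $h_c^{-1}h_c$ factors into $C$. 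Combining gives \eqref{eq:nablaeta}.

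For \eqref{eq:L2IHcIh}: observe $\Veta\Vw$ has polynomial components on the fine meshes, so $\VCI_{H,h}(\Veta\Vw)$ is well-defined; since $\VI_{H_c,h_c}$ is a projection onto $\VV_{H_c,h_c}$, write $(1-\VI_{H_c,h_c})\VCI_{H,h}(\Veta\Vw)$ and bound its $L^2$-norm componentwise by $(H_c+h_c)$ times the gradient of $\VCI_{H,h}(\Veta\Vw)$ using the first summand of each line of \eqref{eq:intpol} (applied to the fine-scale function $\VCI_{H,h}(\Veta\Vw)$, which is admissible since it lies in $\VV_{H,h}$) together with the bounded overlap of the patches. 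Then apply the nodal interpolation stability \eqref{eq:nodalintpolh1stable} to replace $\|\nabla\VCI_{H,h}(\Veta\Vw)\|$ by $C\|\Veta\Vw\|_{1,e,\UN(\CS)}$ (the enlargement from $\CS$ to $\UN(\CS)$ coming from the support of the coarse interpolation stencils). This chain yields \eqref{eq:L2IHcIh}. Throughout, the periodic extension of $\UN$ over $\partial Y$ for the $Y^*$-component must be used so that no boundary layer is lost; this is the only place requiring care beyond routine estimation.
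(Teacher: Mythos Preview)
Your proposal is correct and follows exactly the same approach as the paper's proof: \eqref{eq:L2gradW} from the kernel property and \eqref{eq:intpol}, \eqref{eq:nablaeta} from the product rule plus \eqref{eq:gradeta} and the just-proved \eqref{eq:L2gradW}, and \eqref{eq:L2IHcIh} from \eqref{eq:intpol} followed by the $H^1$-stability \eqref{eq:nodalintpolh1stable} of the nodal interpolant on the piecewise quadratic $\Veta\Vw$. The only slip is notational: in your treatment of \eqref{eq:nablaeta} the cancelling factors are $H_c^{-1}\cdot H_c$ and $h_c^{-1}\cdot h_c$, not $H_c^{-1}h_c$.
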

\begin{proof}
The properties \eqref{eq:intpol} directly imply \eqref{eq:L2gradW}. For the proof of \eqref{eq:nablaeta} the product rule  and \eqref{eq:gradeta} yield
\begin{align*}
\|\Veta\Vw\|_{1,e , \CS}&\leq \|\Vw\|_{1,e, \CS\cap\supp(\Veta)}+C_\eta H_c^{-1}\|w\|_{\CS_0\cap\supp(\nabla \eta_0)}\\*
&\quad+\sum_{i=1}^2C_\eta h_c^{-1}\|w_i\|_{\CS_0\times (\CS_i\cap\supp(\nabla \eta_i))}.
\end{align*}
The combination with \eqref{eq:L2gradW} gives the assertion. For a proof of \eqref{eq:L2IHcIh}, apply \eqref{eq:intpol}. The estimate then follows from the $H^1$-stability of $\VCI_{H,h}$ \eqref{eq:nodalintpolh1stable} on the piece-wise polynomial function $\Veta\Vw$. 
\end{proof}

\begin{proposition}
\label{prop:decayQinfty}
Under the resolution condition \eqref{eq:resolassptcorrec}, there exists $0<\be<1$ such that, for any $\Vv_{H_c, h_c}\in \VV_{H_c, h_c}$ and all $(T, S_1, S_2)\in \CT_{H_c}\times \CT_{h_c}(Y^*)\times \CT_{h_c}(D)$ and $m\in\nz$
\begin{align*}
\|\VQ_{T\times S, \infty}\Vv_{H_c, h_c}\|_{1,e, (G\times Y)\setminus \UN^m(T\times S)}\leq C\be^m\|\Vv_{H_c, h_c}\|_{1,e,T\times S}.
\end{align*}
\end{proposition}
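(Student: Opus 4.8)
The plan is to prove the exponential decay by a standard iteration (Caccioppoli-type) argument over ``annuli'' of coarse patches, adapted to the two-scale setting. Fix $(T,S_1,S_2)$ and abbreviate $\phi := \VQ_{T\times S,\infty}\Vv_{H_c,h_c} \in \VW_{H,h}$. For $\ell \in \nz$ set $R_\ell := \UN^\ell(T\times S)$ and work on the region outside $R_\ell$. The key structural fact, available from Lemma \ref{lem:wellposedcorrec}, is that $\CB$ restricted to $\VW_{H,h}$ is coercive in the $H^1$-semi-norm $\|\cdot\|_{1,e}$ with constant $C_{\min}/2$; moreover $\CB_{T\times S}(\Vw,\Vv_{H_c,h_c})=0$ whenever $\supp\Vw$ does not meet $T\times S$, so $\phi$ satisfies the homogeneous relation $\CB(\Vw,\phi)=0$ for all $\Vw\in\VW_{H,h}$ with support away from $T\times S$.

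First I would introduce a two-scale cut-off function triple $\Veta=(\eta_0,\eta_1,\eta_2)\in\VV_{H_c,h_c}$ with values in $[0,1]$, equal to $0$ on $R_{\ell-1}$ and equal to $1$ outside $R_\ell$, satisfying the gradient bounds \eqref{eq:gradeta}; here the periodic construction of the patches $\UN^\ell(S_2)$ ensures $\eta_1$ can be chosen $Y$-periodic. The natural test function is $\Vw := (1-\VQ_\infty)\VCI_{H,h}(\Veta\phi)$, which lies in $\VW_{H,h}$ because $\VQ_\infty$ projects onto $\VW_{H,h}$, and which (since $\Veta\equiv 1$ far out) agrees with $\phi$ up to terms localized to the transition annulus $R_\ell\setminus R_{\ell-1}$. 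Testing the corrector equation with $\Vw$ and using coercivity gives
\begin{align*}
\tfrac{C_{\min}}{2}\|\phi\|_{1,e,(G\times Y)\setminus R_\ell}^2 \leq \Re\CB(\Vw,\phi) + (\text{commutator terms supported in }R_\ell\setminus R_{\ell-1}),
\end{align*}
and because $\supp\Vw$ avoids $T\times S$ the leading term $\CB(\Vw,\phi)$ of the honest test function vanishes; what survives is bounded, via continuity of $\CB$ and the cut-off estimates \eqref{eq:nablaeta} and \eqref{eq:L2IHcIh}, by $C\|\phi\|_{1,e,R_\ell\setminus R_{\ell-2}}\,\|\phi\|_{1,e,(G\times Y)\setminus R_{\ell-2}}$. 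Writing $a_\ell := \|\phi\|_{1,e,(G\times Y)\setminus R_\ell}^2$ this yields $a_\ell \le C(a_{\ell-2}-a_\ell)$, i.e. $a_\ell \le \frac{C}{1+C}a_{\ell-2}$, and iterating from $\ell=m$ down to $\ell=0$ (using $a_0\le\|\phi\|_{1,e}^2\le C\|\Vv_{H_c,h_c}\|_{1,e,T\times S}^2$ from well-posedness of the idealized corrector) produces $a_m\le C\be^{2m}\|\Vv_{H_c,h_c}\|_{1,e,T\times S}^2$ with $\be:=(C/(1+C))^{1/4}<1$, which is the claim.

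The main obstacle is managing the bookkeeping of the cut-off estimates simultaneously across all three component spaces: one must keep track that $\eta_0$ lives on $\CT_{H_c}$ while $\eta_1,\eta_2$ live on $\CT_{h_c}$, that the neighborhood operator $\UN$ enlarges the support by one coarse layer in each of the $G$- and $Y$-directions independently, and — the genuinely new point compared to the one-scale analyses of \cite{GP15scatteringPG,P15LODhelmholtz} — that the periodic continuation of $\UN^m(S_2)$ over $\partial Y$ is respected by the cut-off and interpolation operators so that no spurious boundary terms appear. The other delicate point is that the commutator $\VCI_{H,h}(\Veta\phi)-\Veta\phi$ and the projection defect $(1-\VI_{H_c,h_c})\VCI_{H,h}(\Veta\phi)$ must both be controlled by $\|\phi\|_{1,e}$ on a slightly enlarged annulus (costing the passage from $R_{\ell-1}$ to $R_{\ell-2}$), which is exactly what estimates \eqref{eq:nablaeta}–\eqref{eq:L2IHcIh} are tailored to deliver; once these are in place the iteration is routine.
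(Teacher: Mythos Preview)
Your overall strategy (Caccioppoli-type iteration over concentric patch annuli) is exactly the paper's, but the test function you propose is wrong, and the error is not cosmetic.

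You set $\Vw := (1-\VQ_\infty)\VCI_{H,h}(\Veta\phi)$ and claim it lies in $\VW_{H,h}$ ``because $\VQ_\infty$ projects onto $\VW_{H,h}$''. That implication goes the wrong way: precisely because $\VQ_\infty$ is a projection \emph{onto} $\VW_{H,h}$, the operator $1-\VQ_\infty$ maps into a complement of $\VW_{H,h}$, not into $\VW_{H,h}$. (Concretely, for $\phi\in\VW_{H,h}$ one has $\VQ_\infty\phi=\phi$, so $(1-\VQ_\infty)\phi=0$; your $\Vw$ cannot ``agree with $\phi$ far out''.) Even worse for the argument, $\VQ_\infty$ is defined by a \emph{global} variational problem on $G\times Y$, so neither $\VQ_\infty\VCI_{H,h}(\Veta\phi)$ nor its complement has support confined to the annulus; you therefore lose the key conclusion $\CB(\Vw,\phi)=\CB_{T\times S}(\Vw,\Vv_{H_c,h_c})=0$.

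The object that does the job is $\Vw := (1-\VI_{H_c,h_c})\VCI_{H,h}(\Veta\phi)$, which you in fact mention later as the ``projection defect''. This lies in $\VW_{H,h}$ because $\VW_{H,h}=\ker\VI_{H_c,h_c}$ by definition, and --- crucially --- $\VI_{H_c,h_c}$ is \emph{quasi-local}, so $\Vw$ is supported in a one-coarse-layer neighborhood of $\supp(\Veta\phi)$ and hence away from $T\times S$. With this correction your iteration $a_\ell\le C(a_{\ell-p}-a_\ell)$ goes through, although the paper organizes the estimate slightly differently: it starts from the $H^1$-semi-inner-product $(\phi,\phi)_{1,e,(G\times Y)\setminus R_m}$, inserts $\Veta$, and splits into four pieces $M_1,\dots,M_4$, invoking the corrector equation only for $M_2$; your displayed inequality ``$\tfrac{C_{\min}}{2}\|\phi\|^2\le \Re\CB(\Vw,\phi)+\ldots$'' is not what coercivity gives directly and would need to be unpacked along these lines. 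A minor slip: it is the $Y^*$-patch $\UN^m(S_1)$ that is continued periodically over $\partial Y$, not $\UN^m(S_2)$, since $D\subset\!\subset Y$.
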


\begin{proof}
For $m\geq5$, we define the cut-off functions $\eta_0\in V_{H_c}^1$, $\eta_1\in L^2(\Om; \widetilde{V}_{h_c}^1(Y^*))$, $\eta_2\in L^2(\Om;V_{h_c}^1(D))$ via
\begin{align*}
\eta_0 &=0 \quad\text{ in }\UN^{m-3}(T)&\text{ and }&&\eta_0&=1 \quad\text{ in }G\setminus \UN^{m-2}(T),\\
\eta_1 &=0 \quad\text{ in }\UN^{m-3}(T\times S_1)&\text{ and }&&\eta_1&=1 \quad\text{ in }(\Om\times Y^*)\setminus \UN^{m-2}(T\times S_1),\\
\eta_2 &=0 \quad\text{ in }\UN^{m-3}(T\times S_2)&\text{ and }&&\eta_2&=1 \quad\text{ in }(\Om\times D)\setminus \UN^{m-2}(T\times S_2),
\end{align*}
where $\eta_1$ and $\eta_2$ w.l.o.g.\ are chosen piece-wise $x$-constant.
The shape regularity implies that $\eta_i$ satisfies \eqref{eq:gradeta}. Denote $\Veta:=(\eta_0, \eta_1, \eta_2)$ and $\CR:=\supp(\nabla \Veta)$. Let $\Vv_{H_c, h_c}\in \VV_{H_c, h_c}$ and denote $\Vphi:=\VQ_{T\times S, \infty}\Vv_{H_c, h_c}\in \VW_{H,h}$. Elementary estimates yield
\begin{align*}
&\!\!\!\!\!\|\Vphi\|_{1,e,(G\times Y)\setminus \UN^m(T\times S)}^2\\
&= |\Re(\Vphi, \Vphi)_{1,e,(G\times Y)\setminus \UN^m(T\times S)}|\\
&\leq |\Re\bigl((\nabla \phi+\nabla_y\phi_1, \eta_0\nabla\phi+\eta_1\nabla_y\phi_1)_{G\times Y^*}+(\nabla_y \phi_2, \eta_2 \nabla_y\phi_2)_{G\times D}\bigr)|\\
&\leq |\Re(\Vphi, \Veta\Vphi)_{1,e}|+|\Re\bigl((\nabla \phi+\nabla_y\phi_1, \phi\nabla \eta_0+\phi_1\nabla_y\eta_1)_{G\times Y^*}+(\nabla_y\phi_2, \phi_2\nabla_y\eta_2)_{G\times D}\bigr)|\\
&\leq M_1+M_2+M_3+M_4
\end{align*}
for
\begin{align*}
M_1&:=|\Re\bigl(\Vphi, (1-\VCI_{H, h})(\Veta\Vphi)\bigr)_{1,e}|,\\
M_2&:=|\Re\bigl(\Vphi, (1-\VI_{H_c, h_c})\VCI_{H,h}(\Veta\Vphi)\bigr)_{1,e}|,\\
M_3&:=|\Re\bigl(\Vphi, \VI_{H_c, h_c}\VCI_{H,h}(\Veta\Vphi)\bigr)_{1,e}|,\\
M_4&:=|\Re\bigl((\nabla \phi+\nabla_y\phi_1, \phi\nabla \eta_0+\phi_1\nabla_y\eta_1)_{G\times Y^*}+(\nabla_y\phi_2, \phi_2\nabla_y\eta_2)_{G\times D}\bigr)|.
\end{align*}
With the stability of $\VCI_{H, h}$ on polynomials \eqref{eq:nodalintpolh1stable} and estimate \eqref{eq:nablaeta} we obtain
\begin{align*}
M_1&\leq C \|\Vphi\|_{1,e,\CR}\,\|\Veta\Vphi-\VCI_{H,h}(\Veta\Vphi)\|_{1,e,\CR}\leq C\|\Vphi\|_{1,e,\CR}\,\|\Vphi\|_{1,e,\UN(\CR)}.
\end{align*}
Since $\Vw:=(1-\VI_{H_c, h_c})\VCI_{H,h}(\Veta\Vphi)\in \VW_{H,h}$, the idealized corrector problem \eqref{eq:idealsubscalecorrec} and the fact that $\Vw$ has support only outside $T\times S$ imply $\CB(\Vw, \Vphi)=\CB_{T\times S}(\Vw, \Vv_{H_c, h_c})=0$. Therefore, we obtain
\begin{align*}
M_2:=|\Re(\Vphi, \Vw)_{1,e}|&\leq C_{\min}^{-1}\,|\Re\bigl(\CB(\Vw, \Vphi)+k^2(w+\chi_Dw_2, \phi+\chi_D \phi_2)_{G\times Y}\bigr)|\\
&=C_{\min}^{-1}|\Re k^2(w+\chi_Dw_2, \phi+\chi_D \phi_2)_{G\times Y}|.
\end{align*}
Hence, estimates \eqref{eq:L2IHcIh} and \eqref{eq:nablaeta} give with the resolution condition \eqref{eq:resolassptcorrec}
\begin{align*}
M_2&\leq C_{\min}^{-1}k^2(H_c^2C_{I_{H_c}}^2C_{\ol, G}+h_c^2C_{I_{h_c}^D}^2C_{\ol, Y})\|\Vphi\|^2_{1,e, (G\times Y)\setminus \UN^m(T\times S)}\\
&\quad+C_{\min}^{-1}k^2(H_c^2C_{I_{H_c}}^2C_{\ol, G}+h_c^2C_{I_{h_c}^D}^2C_{\ol, Y})C_{\VI}C_{\VCI}C_\eta\|\Vphi\|^2_{1, e, \UN^2(\CR)}\\
&\leq \halb \|\Vphi\|^2_{1,e,(G\times Y)\setminus \UN^m(T\times S)}+C\|\Vphi\|^2_{1,e,\UN^2(\CR)},
\end{align*} 
so that the first term can be absorbed.
Because of $\supp(\VI_{H_c, h_c}\VCI_{H,h}(\Veta\Vphi))\subset \UN(\CR)$, the properties \eqref{eq:intpol} of $\VI_{H_c, h_c}$, \eqref{eq:nodalintpolh1stable} of $\VCI_{H,h}$, and estimate \eqref{eq:nablaeta} lead to 
\begin{align*}
M_3&\leq \|\Vphi\|_{1, e, \UN(\CR)}\, \|\VI_{H_c, h_c}\VCI_{H, h}(\Veta\Vphi)\|_{1,e,\UN(\CR)}\leq C\|\Vphi\|^2_{1, e, \UN^2(\CR)}.
\end{align*}
For the last term, the Lipschitz bound \eqref{eq:gradeta} on the cut-off functions and estimate \eqref{eq:L2gradW} show
\begin{align*}
M_4\leq C_\eta\bigl(H_c^{-1}\|\phi\|_{\supp(\nabla \eta_0)}+\sum_{i=1}^2h_c^{-1}\|\phi_i\|_{\supp(\nabla \eta_i)}\bigr)\|\Vphi\|_{1,e,\CR}\leq C\|\Vphi\|^2_{1, e, \UN(\CR)}.
\end{align*}
All in all, it follows for some $\tilde{C}>0$ that
\[\halb\|\Vphi\|^2_{1,e,(G\times Y)\setminus \UN^m(T\times S)}\leq \tilde{C}\|\Vphi\|^2_{1,e,\UN^2(\CR)},\]
where we recall that $\UN^2(\CR)=\UN^{m}(T\times S)\setminus \UN^{m-5}(T\times S)$.
Because of
\[\|\Vphi\|_{1,e,(G\times Y)\setminus \UN^m(T\times S)}^2+\|\Vphi\|^2_{1,e,\UN^m(T\times S)\setminus \UN^{m-5}(T\times S)}=\|\Vphi\|^2_{1,e,(G\times Y)\setminus \UN^{m-5}(T\times S)},\]
we obtain
\[(1+(2\tilde{C})^{-1})\|\Vphi\|^2_{1,e,(G\times Y)\setminus \UN^m(T\times S)}\leq \|\Vphi\|^2_{1,e,(G\times Y)\setminus \UN^{m-5}(T\times S)}.\]
Repeated application of this argument gives for $\tilde{\be}:=2\tilde{C}/(2\tilde{C}+1)<1$ together with the stability of $\VQ_{T\times S, \infty}$ that
\[\|\Vphi\|^2_{1,e,(G\times Y)\setminus \UN^m(T\times S)}\leq \tilde{\be}^{\lfloor m/5\rfloor}\|\Vphi\|^2_{1,e}\leq C_{\VQ}\tilde{\be}^{\lfloor m/5\rfloor}\|\Vv_{H_c, h_c}\|^2_{1,e,T\times S},\]
which gives the assertion after some algebraic manipulations.
\end{proof}

As the localized correctors $\VQ_m$ are the Galerkin approximations of the idealized correctors $\VQ_\infty$, the decay property carries over to $\VQ_m$. 
This is the main observation for the proof of Theorem \ref{thm:decaycorrec}.

\begin{proof}[Proof of Theorem \ref{thm:decaycorrec}]
For $m\geq 3$, we define the cut-off functions $\eta_0\in V_{H_c}^1$, $\eta_1\in L^2(\Om; \widetilde{V}_{h_c}^1(Y^*))$ and $\eta_2\in L^2(\Om; V_{h_c}^1(D))$ via
\begin{align*}
\eta_0 &=0 \quad\text{ in }G\setminus\UN^{m-1}(T)&\text{ and }&&\eta_0&=1 \quad\text{ in } \UN^{m-2}(T),\\*
\eta_1 &=0 \quad\text{ in }(\Om\times Y^*)\setminus\UN^{m-1}(T\times S_1)&\text{ and }&&\eta_1&=1 \quad\text{ in } \UN^{m-2}(T\times S_1),\\*
\eta_2 &=0 \quad\text{ in }(\Om\times D)\setminus\UN^{m-1}(T\times S_2)&\text{ and }&&\eta_2&=1 \quad\text{ in }\UN^{m-2}(T\times S_2),
\end{align*}
where again $\eta_1$ and $\eta_2$ are w.l.o.g.\ piece-wise $x$-constant.
The cut-off functions $\eta_i$ satisfy the bounds \eqref{eq:gradeta}. Set again $\Veta:=(\eta_0, \eta_1, \eta_2)$. 
As already discussed, $\VQ_{T\times S, m}$ can be interpreted as Galerkin approximation of $\VQ_{T\times S, \infty}$ in the discrete subspace $\VW_{H,h}(G_T\times Y_S)\subset \VW_{H,h}$. Hence, C{\'e}a's Lemma gives for any $\Vw_{H,h}\in \VW_{H,h}(G_T\times Y_S)$
\[\|(\VQ_{T\times S, \infty}-\VQ_{T\times S, m})\Vv\|^2_{1,e}\leq C\|\VQ_{T\times S, \infty}\Vv-\Vw_{H, h}\|^2_e.\]
We choose $\Vw_{H,h}:=(1-\VI_{H_c, h_c})\VCI_{H,h}(\Veta\VQ_{T\times S, \infty}\Vv)\in \VW_{H,h}(G_T\times Y_S)$ and obtain with the identity $\VI_{H_c, h_c}\VQ_{T\times S, \infty}\Vv=0$, the estimate \eqref{eq:L2IHcIh}, the approximation and stability  estimates \eqref{eq:intpol} and \eqref{eq:nodalintpolh1stable}, the resolution condition \eqref{eq:resolassptcorrec} and estimate \eqref{eq:nablaeta} that
\begin{align*}
&\!\!\!\!\|(\VQ_{T\times S, \infty}-\VQ_{T\times S, m})\Vv\|_{1,e}^2\\*
&\leq C\|\VQ_{T\times S, \infty}\Vv-(1-\VI_{H_c, h_c})\VCI_{H,h}(\Veta\VQ_{T\times S, \infty}\Vv)\|_e^2\\
&=C\|(1-\VI_{H_c, h_c})\VCI_{H,h}(\VQ_{T\times S, \infty}\Vv-\Veta\VQ_{T\times S, \infty}\Vv)\|^2_{e, (G\times Y)\setminus\{\Veta=1\}}\\
&\leq C\|(1-\Veta)\VQ_{T\times S, \infty}\Vv\|^2_{1,e,\UN((G\times Y)\setminus\{\Veta=1\})}\\*
&\leq C\|\VQ_{T\times S, \infty}\Vv\|^2_{1, e, \UN((G\times Y)\setminus \{\Veta=1\})}.
\end{align*}
Note that $\UN((G\times Y)\setminus \{\Veta=1\})=(G\times Y)\setminus \UN^{m-3}(T\times S)$. Together with Proposition \ref{prop:decayQinfty}, this proves \eqref{eq:decaydiffcorrecT}.

Define $\Vz:=(\VQ_{\infty}-\VQ_m)\Vv$ and $\Vz_{T\times S}:=(\VQ_{T\times S, \infty}-\VQ_{T\times S, m})\Vv$. The ellipticity from Lemma \ref{lem:wellposedcorrec} yields
\[\|\Vz\|^2_{1,e}\leq C\Bigl| \sum_{(T, S_1, S_2)\in \CT_{H_c}\times \CT_{h_c}(Y^*)\times \CT_{h_c}(D)}\CB(\Vz, \Vz_{T\times S})\Bigr|.\]
We define the cut-off functions $\eta_0\in V_{H_c}^1$, $\eta_1\in L^2(\Om; \widetilde{V}_{h_c}^1(Y^*))$, and $\eta_2\in L^2(\Om; V_{h_c}^1(D))$ via
\begin{align*}
\eta_0 &=1 \quad\text{ in }G\setminus\UN^{m+2}(T)&\text{ and }&&\eta_0&=0 \quad\text{ in } \UN^{m+1}(T),\\
\eta_1 &=1 \quad\text{ in }(\Om\times Y^*)\setminus\UN^{m+2}(T\times S_1)&\text{ and }&&\eta_1&=0 \quad\text{ in } \UN^{m+1}(T\times S_1),\\
\eta_2 &=1 \quad\text{ in }(\Om\times D)\setminus\UN^{m+2}(T\times S_2)&\text{ and }&&\eta_2&=0 \quad\text{ in }\UN^{m+1}(T\times S_2),
\end{align*}
where again $\eta_1$ and $\eta_2$ are w.l.o.g.\ piece-wise $x$-constant.
The cut-off functions fulfill \eqref{eq:gradeta} and we set $\Veta:=(\eta_0, \eta_1, \eta_2)$. 
For any $(T, S_1, S_2)\in \CT_{H_c}\times \CT_{h_c}(Y^*)\times \CT_{h_c}(D)$ we have $(1-\VI_{H_c, h_c})\VCI_{H,h}(\Veta\Vz)\in \VW_{H,h}$ with support outside $G_T\times Y_S$. 
Hence, we deduce with $\Vz=\VCI_{H,h}\Vz$ that
\[\CB(\Vz, \Vz_{T\times S})=\CB(\VCI_{H,h}(\Vz-\Veta\Vz), \Vz_{T\times S})+\CB(\VI_{H_c, h_c}\VCI_{H,h}(\Veta\Vz), \Vz_{T\times S}).\]
The function $\Vz-\VCI_{H,h}(\Veta \Vz)$ vanishes on $\{\Veta =1\}$. Thus, the first term on the right-hand satisfies
\[|\CB(\VCI_{H, h}(\Vz-\Veta\Vz), \Vz_{T\times S})|\leq C_B\|\VCI_{H,h}(\Vz-\Veta\Vz)\|_{e,\supp(1-\Veta)}\,\|\Vz_{T\times S}\|_e.\]
The $L^2$- and $H^1$-stability of $\VCI_{H,h}$ on piece-wise polynomials gives together with the estimate \eqref{eq:nablaeta} applied to the cut-off function $1-\Veta$ 
\[\|\VCI_{H,h}(\Vz-\Veta\Vz)\|_{e, \supp(1-\Veta)}\leq C\|\Vz\|_{e, \UN(\supp(1-\Veta))}.\] 
Furthermore, $\VI_{H_c, h_c}\VCI_{H,h}(\Veta\Vz)$ vanishes on $(G\times Y)\setminus \UN(\supp(1-\Veta))$. Therefore, we infer from the stability \eqref{eq:intpolenergystable} of $\VI_{H_c, h_c}$, the stability of $\VCI_{H,h}$ as before, and estimate \eqref{eq:nablaeta} that
\[|\CB(\VI_{H_c, h_c}\VCI_{H,h}(\Veta\Vz), \Vz_{T\times S})|\leq C\|\Vz\|_{e, \UN^2(\supp(1-\Veta))}\,\|\Vz_{T\times S}\|_e.\]
Because of the resolution condition \eqref{eq:resolassptcorrec} and Lemma \ref{lem:wellposedcorrec}, it holds $\|\Vz\|_e\leq C\|\Vz\|_{1,e}$.
Summing up over  $(T, S_1, S_2)\in \CT_{H_c}\times \CT_{h_c}(Y^*)\times \CT_{h_c}(D)$ yields with the Cauchy-Schwarz inequality and the finite overlap of patches that
\begin{align*}
\|\Vz\|^2_{1,e}&\leq C\sum_{(T, S_1, S_2)\in \CT_{H_c}\times \CT_{h_c}(Y^*)\times \CT_{h_c}(D)}\|\Vz\|_{1,e,\UN^2(\supp(1-\Veta))}\, \|\Vz_{T\times S}\|_e\\
&\leq C\bigl(\sqrt{C_{\ol,m, G}}+\sqrt{C_{\ol, m, Y}}\bigr)\|\Vz\|_{1,e}\sqrt{\sum_{(T, S_1, S_2)\in \CT_{H_c}\times \CT_{h_c}(Y^*)\times \CT_{h_c}(D)}\|\Vz_{T\times S}\|^2_e}.
\end{align*}
Combining the last estimate with \eqref{eq:decaydiffcorrecT} concludes the proof.
\end{proof}

\section*{Conclusion}
In this paper, we presented a Localized Orthogonal Decomposition in Petrov-Galerkin formulation for the two-scale Helmholtz-type problems coming from homogenization, see for instance \cite{OV16hmmlod1}. 
Under the natural assumption of a few degrees of freedom per wave length, $k(H_c+h_c)\lesssim1$, and suitably chosen oversampling patches, $m\approx\log(k)$, the method is stable and quasi-optimal without any further restrictions on the mesh width. 
Thereby, this work gives the theoretical foundation and justification for an application of the LOD to the two-scale setting in case the resolution condition poses a practical restriction for the direct  discretization.
We have demonstrated how the periodicity in the function spaces and the coupling of different spaces can be tackled in the LOD framework.
Furthermore, this paper underlines the significance of viewing the Heterogeneous Multiscale Method as a direct discretization with numerical quadrature since only this viewpoint makes the additional application of the LOD possible.

\section*{Acknowledgments}
Financial support by the German research Foundation (DFG) under the project ``OH 98/6-1: Wave propagation in periodic structures and negative refraction mechanisms'' is gratefully acknowledged. 
The authors would like to thank P.\ Henning, A.\ Lamacz, and B.\ Schweizer for fruitful discussions on the subject.


\begin{thebibliography}{10}

\bibitem{AH14LODwave}
A.~Abdulle and P.~Henning,
Localized orthogonal decomposition method for the wave equation with
  a continuum of scales,
{\itshape Math.~Comp.} {\bfseries 86(304)} (2014), 549--587.

\bibitem{All92twosc}
G.~Allaire,
Homogenization and two-scale convergence,
{\itshape SIAM J. Math. Anal.} {\bfseries 23(6)} (1192), 1482--1518.

\bibitem{BS00pollutionhelmholtz}
I.~M. Babu{\v{s}}ka and S.~A. Sauter,
 Is the pollution effect of the {FEM} avoidable for the {H}elmholtz
  equation considering high wave numbers?
{\itshape SIAM Rev.} {\bfseries 42(3)} (2000), 451--484 (electronic),
reprint of {\itshape SIAM J. Numer. Anal.} {\bfseries 34(6)} (1997), 2392--2423.

\bibitem{BBF09hom3d}
G.~Bouchitt{\'e}, C.~Bourel, and D.~Felbacq,
Homogenization of the 3{D} {M}axwell system near resonances and
  artificial magnetism,
{\itshape C. R. Math. Acad. Sci. Paris} {\bfseries 347(9-10)} (2009), 571--576.

\bibitem{BF04homhelmholtz}
G.~Bouchitt{\'e} and D.~Felbacq,
Homogenization near resonances and artificial magnetism from
  dielectrics,
{\itshape C. R. Math. Acad. Sci. Paris} {\bfseries 339(5)} (2004), 377--382.

\bibitem{BS10splitring}
G.~Bouchitt{\'e} and B.~Schweizer,
Homogenization of {M}axwell's equations in a split ring geometry,
{\itshape Multiscale Model. Simul.} {\bfseries 8(3)} (2010), 717--750.

\bibitem{BS13plasmonwaves}
G.~Bouchitt{\'e} and B.~Schweizer,
Plasmonic waves allow perfect transmission through sub-wavelength
  metallic gratings,
{\itshape Netw. Heterog. Media} {\bfseries 8(4)} (2013), 857--878.

\bibitem{BGP15hethelmholtzLOD}
D.~L. {Brown}, D.~{Gallistl}, and D.~{Peterseim},
{M}ultiscale {P}etrov-{G}alerkin method for high-frequency
  heterogeneous {H}elmholtz equations,
In M.~Griebel, M.~A.~Schweizer, editors {\itshape Meshfree Methods for Partial Differential Equations VII}, Lecture Notes in Computational Sciences and Engineering (2016), accepted.

\bibitem{CLX13HDGhelmholtz}
H.~Chen, P.~Lu, and X.~Xu,
A hybridizable discontinuous {G}alerkin method for the {H}elmholtz
  equation with high wave number,
{\itshape SIAM J. Numer. Anal.} {\bfseries 51(4)} (2013), 2166--2188.

\bibitem{CC15hommaxwell}
K.~Cherednichenko and S.~Cooper,
Homogenization of the system of high-contrast {M}axwell equations,
{\itshape Mathematika} {\bfseries 61(2)} (2015), 475--500.

\bibitem{CS14hmmhelmholtz}
P.~Ciarlet Jr. and C.~Stohrer,
Finite Element Heterogeneous Multiscale Method for the classical Helmholtz Equation,
{\itshape C. R. Acad. Sci.} {\bfseries 352} (2014), 755--760.

\bibitem{EP04negphC}
A.~Efros and A.~Pokrovsky,
Dielectric photonic crystal as medium with negative electric
  permittivity and magnetic permeability,
{\itshape Solid State Communications} {\bfseries 129(10)} (2004), 643--647.

\bibitem{EGH15LODpetrovgalerkin}
D.~Elfverson, V.~Ginting, and P.~Henning,
On multiscale methods in {P}etrov-{G}alerkin formulation,
{\itshape Numer. Math.} {\bfseries 131(4)} (2015), 643--682.

\bibitem{EHMP16LODimplementation}
C.~Engwer, P.~Henning, A.~M{\aa}lqvist, and D.~Peterseim,
Efficient implementation of the localized orthogonal decomposition
  method,
{\itshape arXiv} {\bfseries arXiv:1602.01658} (2016).

\bibitem{EM12helmholtz}
S.~Esterhazy and J.~M. Melenk,
On stability of discretizations of the {H}elmholtz equation,
In {\itshape Numerical analysis of multiscale problems}, volume~83 of {\itshape
  Lect. Notes Comput. Sci. Eng.}, pages 285--324. (Springer, Heidelberg, 2012).

\bibitem{BF97homfibres}
D.~Felbacq and G.~Bouchitt{\'e},
Homogenization of a set of parallel fibres,
{\itshape Waves Random Media} {\bfseries 7(2)} (1997), 245--256.

\bibitem{GP15scatteringPG}
D.~Gallistl and D.~Peterseim,
Stable multiscale {P}etrov-{G}alerkin finite element method for high
  frequency acoustic scattering,
{\itshape Comput. Methods Appl. Mech. Engrg.} {\bfseries 295} (2015), 1--17.

\bibitem{GM11HDGhelmholtz}
R.~Griesmaier and P.~Monk,
Error analysis for a hybridizable discontinuous {G}alerkin method for
  the {H}elmholtz equation,
{\itshape J. Sci. Comput.} {\bfseries 49(3)} (2011), 291--310.

\bibitem{HHM15LODmixed}
F.~Hellman, P.~Henning, and A.~M{\aa}lqvist,
Multiscale mixed finite elements,
{\itshape Discrete Contin. Dyn. Syst. Ser. S} {\bfseries 9(5)} (2016), 1269--1298.


\bibitem{HM14LODbdry}
P.~Henning and A.~M{\aa}lqvist,
Localized orthogonal decomposition techniques for boundary value
  problems.
{\itshape SIAM J. Sci. Comput.} {\bfseries 36(4)} (2014), A1609--A1634.

\bibitem{HOV15hmmmaxwell}
P.~Henning, M.~ Ohlberger, and B.~Verf{\"u}rth,
A new Heterogeneous Multiscale Method for time-harmonic Maxwell's equations,
{\itshape SIAM J.~ Numer.~Anal.} {\bfseries 54(6)} (2016), 3493--3522.

\bibitem{HMP14partunity}
P.~Henning, P.~Morgenstern, and D.~Peterseim,
\newblock {\itshape Multiscale partition of unity}, volume 100 of {\itshape Lecture Notes
  in Computational Science and Engineering}, chapter Meshfree Methods for
  Partial Differential Equations VII, pages 185--204.
(Springer International Publishing, 2014).

\bibitem{HP16LODeleasticity}
P.~Henning and A.~Persson,
A multiscale method for linear elasticity reducing {P}oisson locking,
{\itshape Comput. Methods Appl. Mech. Engrg.} {\bfseries 310} (2016), 156--171.

\bibitem{HMP16surveyTrefftz}
R.~Hiptmair, A.~Moiola, and I.~Perugia,
A {S}urvey of {T}refftz methods for the {H}elmholtz {E}quation,
In G.~R. Barrenechea, A.~Cangiani, and E.~H. Georgoulis, editors, {\itshape
  Building Bridges: Connections and Challenges in Modern Approaches to
  Numerical Partial Differential Equations}, Lecture Notes in Computational
  Science and Engineering (Springer), accepted.

\bibitem{HMP15pwdg}
R.~Hiptmair, A.~Moiola, and I.~Perugia,
{P}lane {W}ave {D}iscontinuous {G}alerkin {M}ethods: {E}xponential
  {C}onvergence of the hp-version,
{\itshape Found. Comp. Math.} (2015), 1--39.

\bibitem{HFMQ98VMM}
T.~J.~R. Hughes, G.~R. Feij{\'o}o, L.~Mazzei, and J.-B. Quincy,
The variational multiscale method---a paradigm for computational
  mechanics,
{\itshape Comput. Methods Appl. Mech. Engrg.} {\bfseries 166(1-2)} (1998), 3--24.

\bibitem{HS07VMM}
T.~J.~R. Hughes and G.~Sangalli,
Variational multiscale analysis: the fine-scale {G}reen's function,
  projection, optimization, localization, and stabilized methods,
{\itshape SIAM J. Numer. Anal.} {\bfseries 45(2)} (2007), 539--557.


\bibitem{LS15negindex}
A.~{Lamacz} and B.~{Schweizer},
A negative index meta-material for Maxwell's equations,
{\itshape SIAM J. Math. Anal.} (2016), accepted, preprint as {\bfseries arXiv:1509.03172}.

\bibitem{CJJP02negrefraction}
C.~Luo, S.~G. Johnson, J.~Joannopolous, and J.~Pendry,
All-angle negative refraction without negative effective index,
{\itshape Phys. Rev. B} {\bfseries 65(2001104)} (2002).

\bibitem{MP14LODelliptic}
A.~M{\aa}lqvist and D.~Peterseim,
Localization of elliptic multiscale problems,
{\itshape Math. Comp.} {\bfseries 83(290)} (2014), 2583--2603.

\bibitem{MP15LODeigenvalues}
A.~M{\aa}lqvist and D.~Peterseim,
Computation of eigenvalues by numerical upscaling,
{\itshape Numer. Math.} {\bfseries 130(2)} (2015), 337--361.

\bibitem{MS11helmholtz}
J.~M. Melenk and S.~Sauter,
Wavenumber explicit convergence analysis for {G}alerkin
  discretizations of the {H}elmholtz equation,
{\itshape SIAM J. Numer. Anal.} {\bfseries 49(3)} (2011), 1210--1243.

\bibitem{OBP02magneticactivity}
S.~O'Brien and J.~B. Pendry,
Photonic band-gap effects and magnetic activity in dielectric
  composites,
{\itshape Journal of Physics: Condensed Matter} {\bfseries 14(15)} (2002), 4035.

\bibitem{Ohl05HMM}
M.~Ohlberger,
A posteriori error estimates for the heterogeneous multiscale finite
  element method for elliptic homogenization problems,
{\itshape Multiscale Model. Simul.} {\bfseries 4(1)} (2005), 88--114 (electronic).

\bibitem{OV16hmmlod1}
M.~Ohlberger and B.~Verf{\"u}rth,
A new {H}eterogeneous {M}ultiscale {M}ethod for the {H}elmholtz equation with high contrast,
{\itshape arXiv} {\bfseries arXiv:1605.03400} (2016).

\bibitem{PPR15pwvem}
I.~Perugia, P.~Pietra, and A.~Russo,
A {P}lane {W}ave {V}irtual {E}lement {M}ethod for the {H}elmholtz
  {P}roblem,
{\itshape ESAIM: Math. Model. Numer. Anal.} {\bfseries 50} (2016), 783-808.

\bibitem{P15LODhelmholtz}
D.~{Peterseim},
Eliminating the pollution effect in Helmholtz problems by local
  subscale correction,
{\itshape Math. Comp.} (2016), preprint as {\bfseries arXiv:1411.7512}.

\bibitem{Pet15LODreview}
D.~Peterseim,
Variational multiscale stabilization and the exponential decay of
  fine-scale correctors,
In G. R. Barrenechea, F. Brezzi, A. Cangiani, and E. H. Georgoulis, editors, {\itshape Building Bridges: Connections and Challenges in Modern Approaches to Numerical Partial Differential Equations}, volume 114 of Lecture Notes in Computational Science and Engineering (Springer 2016).

\bibitem{PE03lefthanded}
A.~Pokrovsky and A.~Efros,
Diffraction theory and focusing of light by a slab of left-handed
  material,
{\itshape Physica B: Condensed Matter} {\bfseries 338(1-4)} (2003), 333--337.

\bibitem{Sau06convanahelmholtz}
S.~A. Sauter,
A refined finite element convergence theory for highly indefinite
  {H}elmholtz problems,
{\itshape Computing} {\bfseries 78(2)} (2006), 101--115.

\end{thebibliography}
\end{document}